\documentclass[11pt, letterpaper]{amsart}

\usepackage{amsmath, amssymb, amsthm, mathtools}
\usepackage{graphicx}
\usepackage{xcolor}
\usepackage{float}
\usepackage{placeins}
\usepackage{subcaption}
\usepackage{enumitem}
\usepackage{hyperref}

\theoremstyle{plain}
\newtheorem{theorem}{Theorem}[section]
\newtheorem{lemma}[theorem]{Lemma}
\newtheorem{proposition}[theorem]{Proposition}
\newtheorem{corollary}[theorem]{Corollary}

\theoremstyle{definition}
\newtheorem{definition}[theorem]{Definition}
\newtheorem{example}[theorem]{Example}

\theoremstyle{remark}
\newtheorem{remark}[theorem]{Remark}

\title[Temporal Metric Spaces and TIFS]{Temporal Metric Spaces and Temporal Iterated Function Systems}

\author{Amal P. S.}
\address{APJ Abdul Kalam Technological University, Thiruvananthapuram, Kerala, India; Rajagiri School of Engineering and Technology, Kochi, Kerala, India}
\email{amalb9111@gmail.com}
\thanks{Corresponding author: Amal P. S}

\author{Vinod Kumar P. B.}
\address{APJ Abdul Kalam Technological University, Thiruvananthapuram, Kerala, India; Muthoot Institute of Technology and Science, Kochi, Kerala, India}
\email{vinodkumarpb@mgits.ac.in}

\author{Ramkumar P. B.}
\address{APJ Abdul Kalam Technological University, Thiruvananthapuram, Kerala, India; Rajagiri School of Engineering and Technology, Kochi, Kerala, India}
\email{ramkumar\_pb@rajagiritech.edu.in}

\subjclass[2020]{54E35, 37B99, 28A80}
\keywords{Temporal metric spaces, Temporal iterated function systems (TIFS), Trajectory space, Temporal hyperspace, Evolution family}

\begin{document}

\begin{abstract}
We introduce temporal metric spaces, a framework for formalizing metric and topological structures on continuously evolving domains. Rather than defining a single metric on a fixed space, we construct a family of interval-dependent metrics on an associated trajectory space, enabling distances to be measured between continuous trajectories over finite time intervals. We establish the fundamental topology of the trajectory space, prove that it inherits completeness from the underlying metric space, and develop the corresponding temporal hyperspace together with its completeness. Finally, we develop a theory of iterated function systems acting on these evolving spaces, proving that their attractors evolve coherently with the underlying dynamics and remain stable even when the evolving geometry causes the generating mappings to lose their classical contraction property.
\end{abstract}

\maketitle

\section{Introduction}

Since Fréchet formalized the notion of a metric space \cite{frechet1906} and Banach established the contraction mapping principle \cite{banach1922}, metric methods have become one of the central tools in analysis, topology, and dynamical systems. Motivated by both theoretical developments and practical applications, numerous generalizations of metric spaces have subsequently been proposed. These include partial metric spaces, in which the self-distance of a point need not vanish \cite{matthews1994}, cone metric spaces, where distances take values in ordered Banach spaces rather than the real line \cite{huang2007}, and various generalized metric structures that evaluate three or more points simultaneously by defining the distance function on $X^n$ rather than the classical $X \times X$. A comprehensive survey of these and related generalizations is given by Khamsi \cite{khamsi2015}. Despite their diversity, these theories all preserve a common underlying assumption: the ambient set on which the metric is defined remains fixed.

These metric structures have provided the natural setting for the development of fractal geometry through iterated function systems (IFSs). In the classical theory, an IFS consists of finitely many contractions acting on a complete metric space, and Hutchinson's theorem guarantees the existence of a unique compact attractor \cite{hutchinson1981}. This framework was further developed by Barnsley and Demko \cite{barnsley1985} and has become a cornerstone of modern fractal geometry \cite{falconer2014}. Numerous extensions have subsequently been introduced by relaxing the classical contraction conditions or by allowing the generating mappings to vary over time. Examples include non-autonomous iterated function systems, whose generating maps vary with time while still acting on a fixed metric space \cite{rempe2016}, Kannan iterated function systems \cite{sahu2010}, and more recently $G$-iterated function systems, where contractivity is achieved only after finite compositions of the generating maps \cite{amal2026}.

Despite these substantial advances, virtually all existing frameworks share a fundamental limitation: the underlying domain itself is assumed to remain static. The metric may be generalized, the contraction condition may be weakened, and the generating maps may vary with time, yet the ambient space on which the dynamics occur is fixed. This assumption breaks down when modeling systems whose domains continuously evolve, such as spaces undergoing expansion, contraction, shear, or other continuous deformations. In such dynamic settings, the generating mappings of the IFS evolve concurrently with the space itself. As the domain deforms over time, these evolving mappings may fail to remain strict contractions. Consequently, the hypotheses required by classical fixed-point and iterated function system theory no longer hold.

To overcome this limitation, we introduce the notion of a \emph{temporal metric space}. Rather than forcing dynamics to occur on a single static domain, we analyze systems whose domains deform continuously according to an evolution family. We embed these moving domains into a trajectory space equipped with temporal metrics that evaluate the distance between continuous trajectories over intervals of time. This framework allows the domain to deform while preserving the intrinsic topology of the underlying dynamics.

The paper is organized as follows. In Section 2, we introduce the concept of temporal metric spaces by formalizing continuously evolving domains and constructing interval-dependent metrics on their associated trajectory spaces, concluding with several examples. In Section 3, we establish the fundamental topological structure of the trajectory space, analyzing convergence conditions, sequential compactness, and the continuous lifting of dynamically equivariant mappings. In Section 4, we construct the temporal hyperspace equipped with a temporal Hausdorff metric and formally prove its completeness. Finally, in Section 5, we develop the theory of temporal iterated function systems, providing explicit examples and proving that flow-induced attractors maintain their topological stability even when time-dependent evolution causes the generating mappings to lose their classical contraction properties.

\section{Temporal Metric Spaces}

\begin{definition}
Let $(X, d)$ be a metric space and $T \subseteq \mathbb{R}$ an interval. A \textit{temporal metric space} over $X$ is a 4-tuple $(\mathcal{X}, \Phi, \Gamma, \mathcal{D})$ satisfying the following conditions:

\begin{enumerate}
    \item A family of \textit{fibers} $\mathcal{X} = \{X(t)\}_{t \in T}$, where $X(t) \subseteq X$ for all $t \in T$.
    
    \item An \textit{evolution family} $\Phi = \{\Phi_{t, s}: X(s) \to X(t) \mid s, t \in T\}$ consisting of homeomorphisms such that:
    \begin{enumerate}
        \item[(a)] $\Phi_{t, t} = \mathrm{Id}_{X(t)}$ for all $t \in T$.
        \item[(b)] $\Phi_{t, s} \circ \Phi_{s, r} = \Phi_{t, r}$ for all $r, s, t \in T$.
        \item[(c)] For each fixed $s \in T$, the map $T \times X(s) \to X$ given by $(t, x) \mapsto \Phi_{t, s}(x)$ is continuous.
    \end{enumerate}
    
    \item A \textit{trajectory space} $\Gamma = \{\gamma \in C(T, X) \mid \gamma(t) = \Phi_{t, s}(\gamma(s)) \text{ for all } s, t \in T\}$.
    
    \item A family of \textit{temporal metrics} $\mathcal{D} = \{\mathcal{D}_{[a, b]}\}$ indexed by the closed subintervals $[a, b] \subseteq T$, such that for every $[a, b]$, the map $\mathcal{D}_{[a, b]}: \Gamma \times \Gamma \to [0, \infty)$ is a metric on $\Gamma$. For any $\gamma_1, \gamma_2 \in \Gamma$, the mapping $(a, b) \mapsto \mathcal{D}_{[a, b]}(\gamma_1, \gamma_2)$ is continuous on the domain $\{(a, b) \in T \times T \mid a \leq b\}$. Furthermore, for any $t \in T$, the metric over the degenerate interval $[t, t]$ satisfies:
    \[
        \mathcal{D}_{[t, t]}(\gamma_1, \gamma_2) = d(\gamma_1(t), \gamma_2(t)).
    \]
\end{enumerate}
\end{definition}

\begin{lemma}
\label{lem:global_identity}
Let $(\mathcal{X}, \Phi, \Gamma, \mathcal{D})$ be a temporal metric space. For any $\gamma_1, \gamma_2 \in \Gamma$, if there exists a time $t^* \in T$ such that $\gamma_1(t^*) = \gamma_2(t^*)$, then $\gamma_1 = \gamma_2$.
\end{lemma}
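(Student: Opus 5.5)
The argument is a direct pointwise one that uses only the trajectory condition in the definition of $\Gamma$; no metric or continuity properties are needed. Fix an arbitrary $t \in T$. Since $\gamma_1, \gamma_2 \in \Gamma$, the defining relation $\gamma(t) = \Phi_{t,s}(\gamma(s))$ holds for \emph{all} pairs $s,t \in T$. We take $s = t^*$ in that relation for both trajectories.

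This gives $\gamma_1(t) = \Phi_{t,t^*}(\gamma_1(t^*))$ and $\gamma_2(t) = \Phi_{t,t^*}(\gamma_2(t^*))$. The map $\Phi_{t,t^*}\colon X(t^*) \to X(t)$ is a single-valued function. Both points $\gamma_1(t^*)$ and $\gamma_2(t^*)$ lie in its domain $X(t^*)$, as is implicit in the membership condition for $\Gamma$. By hypothesis these two points coincide, so their images under $\Phi_{t,t^*}$ coincide, and hence $\gamma_1(t) = \gamma_2(t)$. Since $t$ was arbitrary, $\gamma_1 = \gamma_2$ as maps $T \to X$.

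There is no real obstacle. The only point worth stating explicitly is that the trajectory condition quantifies over all $s,t\in T$, and not just $s \le t$. If it were one-sided, we would instead use that each $\Phi_{t^*,t}$ is a homeomorphism, in particular injective: from $\Phi_{t^*,t}(\gamma_1(t)) = \Phi_{t^*,t}(\gamma_2(t))$ we would conclude $\gamma_1(t)=\gamma_2(t)$ for $t < t^*$. As an alternative formulation of the same step, one could note that $\mathcal{D}_{[t^*,t^*]}(\gamma_1,\gamma_2) = d(\gamma_1(t^*),\gamma_2(t^*)) = 0$. That observation alone does not give equality on all of $T$, however, so the evolution-family argument above is the one we rely on.
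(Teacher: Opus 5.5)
Your proposal is correct and follows the paper's own proof: fix $t$, write $\gamma_i(t) = \Phi_{t,t^*}(\gamma_i(t^*))$ using the trajectory condition, and conclude $\gamma_1(t)=\gamma_2(t)$ because $\Phi_{t,t^*}$ is a function applied to the same point. Your side remarks on the one-sided case and on $\mathcal{D}_{[t^*,t^*]}$ are accurate but not needed.
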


\begin{proof}
Let $t \in T$. By the definition of $\Gamma$, we have $\gamma(t) = \Phi_{t, t^*}(\gamma(t^*))$ for any trajectory $\gamma \in \Gamma$. Applying this to $\gamma_1$ and $\gamma_2$ along with the hypothesis $\gamma_1(t^*) = \gamma_2(t^*)$ yields
\[
    \gamma_1(t) = \Phi_{t, t^*}(\gamma_1(t^*)) = \Phi_{t, t^*}(\gamma_2(t^*)) = \gamma_2(t).
\]
Since $t \in T$ was arbitrary, we conclude that $\gamma_1 = \gamma_2$.
\end{proof}

We now present three concrete examples of temporal metric spaces.

\begin{example}
Let $(X, d)$ be a metric space and $T \subseteq \mathbb{R}$. Define the static temporal metric space $(\mathcal{X}, \Phi, \Gamma, \mathcal{D})$ over $X$ by:
\begin{enumerate}
    \item $X(t) = X$ for all $t \in T$.
    \item $\Phi_{t, s}(x) = x$ for all $x \in X$ and $s, t \in T$.
    \item $\Gamma = \{\gamma \in C(T, X) \mid \gamma(t) = x_0 \text{ for some fixed } x_0 \in X\}$.
    \item For any $\gamma_1, \gamma_2 \in \Gamma$ identically equal to $x_1, x_2 \in X$, set $\mathcal{D}_{[a, b]}(\gamma_1, \gamma_2) = d(x_1, x_2)$ for all $a \leq b$.
\end{enumerate}

It is immediate that $\Phi_{t, t} = \mathrm{Id}_{X}$ and $\Phi_{t, s} \circ \Phi_{s, r} = \Phi_{t, r}$. Continuity of the map $(t, x) \mapsto x$ is trivial. The metric mapping $(a, b) \mapsto d(x_1, x_2)$ is constant, hence continuous. Over the degenerate interval, $\mathcal{D}_{[t, t]}(\gamma_1, \gamma_2) = d(x_1, x_2) = d(\gamma_1(t), \gamma_2(t))$.
\end{example}

\begin{example}
Let $X = \mathbb{R}$ with the Euclidean metric $d(x, y) = |x - y|$, and $T = [1, \infty)$. Define $(\mathcal{X}, \Phi, \Gamma, \mathcal{D})$ by:
\begin{enumerate}
    \item $X(t) = [-t, t] \subset \mathbb{R}$ for all $t \in T$.
    \item $\Phi_{t, s}(x) = (t/s)x$ for all $x \in X(s)$ and $s, t \in T$.
    \item $\Gamma = \{\gamma \in C(T, X) \mid \gamma(t) = t x_0 \text{ for some } x_0 \in [-1, 1]\}$.
    \item $\mathcal{D}_{[a, b]}(\gamma_1, \gamma_2) = \max_{\tau \in [a, b]} |\gamma_1(\tau) - \gamma_2(\tau)|$.
\end{enumerate}

For $x \in X(s)$, $|\Phi_{t, s}(x)| = (t/s)|x| \leq t$, hence $\Phi_{t, s}$ maps $X(s)$ bijectively onto $X(t)$. The identities $\Phi_{t, t}(x) = x$ and $\Phi_{t, s}(\Phi_{s, r}(x)) = \Phi_{t, r}(x)$ hold. For $s \geq 1$, the map $(t, x) \mapsto (t/s)x$ is continuous. 

For $\gamma_i(t) = t x_i$, evaluating the maximum yields $\mathcal{D}_{[a, b]}(\gamma_1, \gamma_2) = b |x_1 - x_2|$. The mapping $(a, b) \mapsto b|x_1 - x_2|$ is continuous for $a \leq b$. The boundary limit yields $\mathcal{D}_{[t, t]}(\gamma_1, \gamma_2) = t|x_1 - x_2| = |\gamma_1(t) - \gamma_2(t)|$.
\end{example}

\begin{example}
Let $V: \mathbb{R}^n \to \mathbb{R}^n$ be a globally Lipschitz continuous vector field. Let $T = \mathbb{R}$ and $X = \mathbb{R}^n$ with the Euclidean metric $d$. Define $(\mathcal{X}, \Phi, \Gamma, \mathcal{D})$ by:
\begin{enumerate}
    \item $X(t) = \mathbb{R}^n$ for all $t \in T$.
    \item $\Phi_{t, s}(x)$ is the value at time $t$ of the unique solution to $\dot{y} = V(y)$ with $y(s) = x$.
    \item $\Gamma$ is the set of all global integral curves of $V$.
    \item For $a < b$, $\mathcal{D}_{[a, b]}(\gamma_1, \gamma_2) = \frac{1}{b - a} \int_{a}^{b} d(\gamma_1(\tau), \gamma_2(\tau)) \, d\tau$. For $a = b = t$, $\mathcal{D}_{[t, t]}(\gamma_1, \gamma_2) = d(\gamma_1(t), \gamma_2(t))$.
\end{enumerate}

Since $V$ is globally Lipschitz on $\mathbb{R}^n$, every initial value problem admits a unique solution defined for all $t \in \mathbb{R}$. Consequently, $\Phi_{t, t} = \mathrm{Id}_{\mathbb{R}^n}$ and $\Phi_{t, s} \circ \Phi_{s, r} = \Phi_{t, r}$, the latter following from the uniqueness of integral curves. Moreover, the flow map $(t, x) \mapsto \Phi_{t, s}(x)$ is continuous.

For trajectories $\gamma_1, \gamma_2 \in \Gamma$, define the distance function:
$$
    f(\tau) = d(\gamma_1(\tau), \gamma_2(\tau)).
$$
Because $\gamma_1, \gamma_2$, and the metric $d$ are continuous, $f: \mathbb{R} \to \mathbb{R}$ is continuous. Hence, for $a < b$, the map:
$$
    (a, b) \longmapsto \frac{1}{b - a} \int_a^b f(\tau) \, d\tau
$$
is continuous on $\{(a, b) \in \mathbb{R}^2 \mid a < b\}$. Furthermore, the continuity of $f$ implies:
$$
    \lim_{\substack{a, b \to t \\ a < b}} \frac{1}{b - a} \int_a^b f(\tau) \, d\tau = f(t) = d(\gamma_1(t), \gamma_2(t)),
$$
so the family of metrics $\mathcal{D}$ satisfies the required continuity conditions. 

Thus, every globally Lipschitz continuous vector field on $\mathbb{R}^n$ canonically determines a temporal metric space.
\end{example}

\section{Topological Structure of the Trajectory Space}

Let $(\mathcal{X}, \Phi, \Gamma, \mathcal{D})$ be a temporal metric space. The family of temporal metrics $\{\mathcal{D}_{[a, b]}\}$ induces a natural convergence structure on $\Gamma$ defined intervalwise as follows.

\begin{definition}
Let $\{\gamma_n\} \subset \Gamma$ be a sequence of trajectories, and let $\gamma \in \Gamma$. The sequence \textit{converges to $\gamma$ on the interval} $[a, b] \subseteq T$ if $\lim_{n \to \infty} \mathcal{D}_{[a, b]}(\gamma_n, \gamma) = 0$. Furthermore, the sequence \textit{converges to $\gamma$ in} $\Gamma$, denoted $\gamma_n \to \gamma$, if it converges to $\gamma$ on every compact interval $[a, b] \subseteq T$.
\end{definition}

\begin{definition}
Let $\{\gamma_n\} \subset \Gamma$ be a sequence of trajectories. The sequence is \textit{Cauchy on the interval} $[a, b] \subseteq T$ if for every $\epsilon > 0$, there exists $N \in \mathbb{N}$ such that $\mathcal{D}_{[a, b]}(\gamma_n, \gamma_m) < \epsilon$ for all $n, m \geq N$. The sequence is \textit{Cauchy in} $\Gamma$ if it is Cauchy on every compact interval $[a, b] \subseteq T$.
\end{definition}

\begin{definition}
\label{def:lsc_tms}
Let $(\mathcal{X}, \Phi, \Gamma, \mathcal{D})$ be a temporal metric space. The space is \textit{lower semi-continuous on the interval} $[a, b] \subseteq T$ if for every $\gamma \in \Gamma$, and every sequence $\{\alpha_m\} \subset \Gamma$ converging pointwise to $\alpha \in \Gamma$ on $[a, b]$, the interval metric satisfies:
$$
    \mathcal{D}_{[a, b]}(\gamma, \alpha) \le \liminf_{m \to \infty} \mathcal{D}_{[a, b]}(\gamma, \alpha_m).
$$
The temporal metric space is \textit{lower semi-continuous} if this condition holds on every compact interval $[a, b] \subseteq T$.
\end{definition}

\begin{definition}
\label{def:strong_tms}
Let $(\mathcal{X}, \Phi, \Gamma, \mathcal{D})$ be a temporal metric space. The space is \textit{strongly continuous on the interval} $[a, b] \subseteq T$ if for every sequence $\{\alpha_m\} \subset \Gamma$ converging pointwise to $\alpha \in \Gamma$ on $[a, b]$, the sequence converges in the interval metric:
$$
    \lim_{m \to \infty} \mathcal{D}_{[a, b]}(\alpha_m, \alpha) = 0.
$$
The temporal metric space is \textit{strongly continuous} if this condition holds on every compact interval $[a, b] \subseteq T$.
\end{definition}

\begin{proposition}
Every strongly continuous temporal metric space is lower semi-continuous.
\end{proposition}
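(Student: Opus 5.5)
The plan is a direct triangle-inequality argument on a fixed interval. Fix a compact interval $[a,b]\subseteq T$, a trajectory $\gamma\in\Gamma$, and a sequence $\{\alpha_m\}\subset\Gamma$ converging pointwise on $[a,b]$ to some $\alpha\in\Gamma$. By hypothesis the space is strongly continuous on $[a,b]$, so Definition~\ref{def:strong_tms} applies to this very sequence and gives $\mathcal{D}_{[a,b]}(\alpha_m,\alpha)\to 0$. Only the metric axioms of the single metric $\mathcal{D}_{[a,b]}$ on $\Gamma$ (part (4) of the definition of a temporal metric space) are then needed.

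The key step is the triangle inequality for $\mathcal{D}_{[a,b]}$ together with its symmetry. For every $m$,
\[
\mathcal{D}_{[a,b]}(\gamma,\alpha)\le \mathcal{D}_{[a,b]}(\gamma,\alpha_m)+\mathcal{D}_{[a,b]}(\alpha_m,\alpha).
\]
Taking $\liminf_{m\to\infty}$ on the right-hand side, the second term tends to $0$. Hence its $\liminf$ equals its limit, and
\[
\liminf_{m\to\infty}\bigl(\mathcal{D}_{[a,b]}(\gamma,\alpha_m)+\mathcal{D}_{[a,b]}(\alpha_m,\alpha)\bigr)=\liminf_{m\to\infty}\mathcal{D}_{[a,b]}(\gamma,\alpha_m).
\]
This uses the elementary fact that $\liminf(x_m+y_m)=\liminf x_m+\lim y_m$ whenever $y_m$ converges; this remains valid if the $\liminf$ is $+\infty$. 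It yields exactly the inequality in Definition~\ref{def:lsc_tms} on $[a,b]$.

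Since $[a,b]$ was an arbitrary compact interval, the condition holds on every compact interval, and the space is lower semi-continuous. One could even note the stronger conclusion $\mathcal{D}_{[a,b]}(\gamma,\alpha_m)\to\mathcal{D}_{[a,b]}(\gamma,\alpha)$, using the reverse triangle inequality $|\mathcal{D}_{[a,b]}(\gamma,\alpha_m)-\mathcal{D}_{[a,b]}(\gamma,\alpha)|\le\mathcal{D}_{[a,b]}(\alpha_m,\alpha)$, but only the $\liminf$ inequality is required.

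There is no genuine obstacle. The only point needing care is that both definitions quantify over the same class of sequences (pointwise convergence on $[a,b]$ to a limit $\alpha$ that lies in $\Gamma$), so strong continuity applies verbatim to the sequence in question, without any extra hypothesis on the underlying space $(X,d)$ or on the evolution family $\Phi$.
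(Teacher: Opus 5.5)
Your proposal is correct and matches the paper's proof: the triangle inequality $\mathcal{D}_{[a,b]}(\gamma,\alpha)\le\mathcal{D}_{[a,b]}(\gamma,\alpha_m)+\mathcal{D}_{[a,b]}(\alpha_m,\alpha)$, with strong continuity sending the second term to $0$, followed by taking $\liminf$. Your observation that the reverse triangle inequality even gives $\mathcal{D}_{[a,b]}(\gamma,\alpha_m)\to\mathcal{D}_{[a,b]}(\gamma,\alpha)$ is a valid strengthening, and the appeal to symmetry is unnecessary, since the inequality is already written in the order that strong continuity controls.
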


\begin{proof}
Let $[a, b] \subseteq T$ be a closed interval, let $\gamma \in \Gamma$, and let $\{\alpha_m\} \subset \Gamma$ be a sequence converging pointwise to $\alpha \in \Gamma$ on $[a, b]$. By the triangle inequality for the metric $\mathcal{D}_{[a, b]}$:
\[
    \mathcal{D}_{[a, b]}(\gamma, \alpha) \le \mathcal{D}_{[a, b]}(\gamma, \alpha_m) + \mathcal{D}_{[a, b]}(\alpha_m, \alpha).
\]
Since the space is strongly continuous, $\lim_{m \to \infty} \mathcal{D}_{[a, b]}(\alpha_m, \alpha) = 0$. Taking the limit inferior of both sides yields:
\[
    \mathcal{D}_{[a, b]}(\gamma, \alpha) \le \liminf_{m \to \infty} \mathcal{D}_{[a, b]}(\gamma, \alpha_m).
\]
Thus, the space is lower semi-continuous.
\end{proof}

The following example demonstrates that lower semi-continuity is a strictly weaker condition than strong continuity, as pointwise convergence of trajectories does not automatically guarantee convergence in the interval metric.

\begin{example}\label{ex:paris_metric}
Let $(X, d) = (\mathbb{R}^2, \|\cdot\|_2)$ and $T = \mathbb{R}$. Define the temporal metric space $(\mathcal{X}, \Phi, \Gamma, \mathcal{D})$ by:
\begin{enumerate}
    \item $X(t) = \mathbb{R}^2$ for all $t \in T$.
    \item $\Phi_{t, s}(x) = e^{t-s}x$ for all $x \in X(s)$ and $s, t \in T$.
    \item $\Gamma = \{ \gamma_p \in C(T, \mathbb{R}^2) \mid \gamma_p(t) = e^t p, \; p \in \mathbb{R}^2 \}$.
    \item Define the equivalence relation $x \sim y$ if there exists $\lambda > 0$ such that $x = \lambda y$. Let $d_P(x, y)$ be the French Railway (Paris) metric:
    $$
        d_P(x, y) = 
        \begin{cases} 
        \|x - y\|_2 & \text{if } x \sim y \\
        \|x\|_2 + \|y\|_2 & \text{if } x \not\sim y.
        \end{cases}
    $$
    It is a classical result that $d_P$ defines a valid metric on $\mathbb{R}^2$. For any compact interval $[a, b] \subseteq T$ with length $\ell = b - a$, and for any $\alpha, \beta \in \Gamma$, set:
    $$
        \mathcal{D}_{[a, b]}(\alpha, \beta) = \|\alpha(b) - \beta(b)\|_2 + \ell d_P(\alpha(b), \beta(b)).
    $$
\end{enumerate}

Since $e^{t-s} > 0$, the evolution map preserves equivalence classes ($x \sim y \iff \Phi_{t,s}(x) \sim \Phi_{t,s}(y)$). For any trajectories $\alpha, \beta \in \Gamma$ determined by their initial states $p = \alpha(0)$ and $q = \beta(0)$, we have $\alpha(b) = e^b p$ and $\beta(b) = e^b q$. The interval metric evaluates explicitly to:
$$
    \mathcal{D}_{[a, b]}(\alpha, \beta) = e^b \|p - q\|_2 + (b - a) e^b d_P(p, q).
$$
Because this expression is a simple composition of continuous exponential and linear functions in the variables $a$ and $b$, the mapping $(a, b) \mapsto \mathcal{D}_{[a, b]}(\alpha, \beta)$ is continuous for $a \leq b$. On degenerate intervals where $a = b$, the length vanishes, reducing the metric to $\mathcal{D}_{[t, t]}(\alpha, \beta) = e^t \|p - q\|_2 = \|\alpha(t) - \beta(t)\|_2$.

To demonstrate that this space is not strongly continuous, consider the compact interval $[0, 1]$. Let $\alpha_m \in \Gamma$ be the sequence determined by $\alpha_m(0) = (1, 1/m)$, which converges pointwise on $[0, 1]$ to the trajectory $\alpha \in \Gamma$ determined by $\alpha(0) = (1, 0)$. Since $\alpha_m(0) \not\sim \alpha(0)$ for all $m$, we have $\alpha_m(1) \not\sim \alpha(1)$. Evaluating the limit:
$$
\begin{aligned}
    \lim_{m \to \infty} \mathcal{D}_{[0, 1]}(\alpha_m, \alpha) &= \lim_{m \to \infty} \Big( \|(e, \frac{e}{m}) - (e, 0)\|_2 + 1 \cdot \big( \|(e, \frac{e}{m})\|_2 + \|(e, 0)\|_2 \big) \Big) \\
    &= \lim_{m \to \infty} \left( \frac{e}{m} + \sqrt{e^2 + \frac{e^2}{m^2}} + e \right) = 2e \neq 0.
\end{aligned}
$$
Thus, strong continuity fails.

To verify lower semi-continuity, let $\gamma \in \Gamma$ and let $\alpha_m \to \alpha$ pointwise on an arbitrary interval $[a, b]$. Define the spatial boundary points $Z = \gamma(b)$, $X_m = \alpha_m(b)$, and $X = \alpha(b)$. Pointwise convergence implies $X_m \to X$ in the Euclidean norm. 
\begin{itemize}
    \item If $X = Z$, then $\mathcal{D}_{[a, b]}(\gamma, \alpha) = 0 \leq \liminf_{m \to \infty} \mathcal{D}_{[a, b]}(\gamma, \alpha_m)$ holds trivially.
    \item If $X \neq Z$ and $X \not\sim Z$, the ray $R_Z = \{ \lambda Z \mid \lambda \ge 0 \}$ is closed. Because $X \notin R_Z$, $X_m \notin R_Z$ for sufficiently large $m$, forcing $X_m \not\sim Z$. Taking the limit inferior:
    \begin{multline*}
    \liminf_{m \to \infty} \Big( \|Z - X_m\|_2 + \ell (\|Z\|_2 + \|X_m\|_2) \Big) \\
    = \|Z - X\|_2 + \ell (\|Z\|_2 + \|X\|_2) = \mathcal{D}_{[a, b]}(\gamma, \alpha).
    \end{multline*}
    \item If $X \neq Z$ and $X \sim Z$, the bound $d_P(u, v) \geq \|u - v\|_2$ yields:
    \begin{multline*}
    \liminf_{m \to \infty} \mathcal{D}_{[a, b]}(\gamma, \alpha_m) \geq \liminf_{m \to \infty} \Big( \|Z - X_m\|_2 + \ell \|Z - X_m\|_2 \Big) \\
    = (1+\ell)\|Z - X\|_2 = \|Z - X\|_2 + \ell d_P(Z, X) = \mathcal{D}_{[a, b]}(\gamma, \alpha).
    \end{multline*}
\end{itemize}
In all cases, $\mathcal{D}_{[a, b]}(\gamma, \alpha) \leq \liminf_{m \to \infty} \mathcal{D}_{[a, b]}(\gamma, \alpha_m)$. The temporal metric space is lower semi-continuous.
\end{example}

With the convergence structure of trajectories fully established, we now formalize the notion of completeness directly within the trajectory space.

\begin{definition}
A temporal metric space $(\mathcal{X}, \Phi, \Gamma, \mathcal{D})$ is complete if its trajectory space $\Gamma$ is complete; that is, if every Cauchy sequence in $\Gamma$ converges to a unique limit trajectory $\gamma \in \Gamma$.
\end{definition}

We now establish sufficient conditions under which this property is inherited from the base space.

\begin{theorem}
\label{thm:inheritance}
Let $(\mathcal{X}, \Phi, \Gamma, \mathcal{D})$ be a lower semi-continuous temporal metric space over a base space $(X, d)$. If $(X, d)$ is complete and there exists $t_0 \in T$ such that $X(t_0)$ is closed in $X$, then $(\mathcal{X}, \Phi, \Gamma, \mathcal{D})$ is complete.
\end{theorem}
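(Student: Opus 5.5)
Take a Cauchy sequence $\{\gamma_n\}\subset\Gamma$, build a candidate limit from the time slice $t_0$, and show convergence on each compact interval using lower semi-continuity.

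\textbf{Step 1: a limit point at time $t_0$.} Use the degenerate interval $[t_0,t_0]$. Since $\{\gamma_n\}$ is Cauchy on every compact interval, it is Cauchy on $[t_0,t_0]$. Because $\mathcal{D}_{[t_0,t_0]}(\gamma_n,\gamma_m)=d(\gamma_n(t_0),\gamma_m(t_0))$, the sequence $\{\gamma_n(t_0)\}$ is Cauchy in $(X,d)$. By completeness it converges to some $x_0\in X$. Since each $\gamma_n(t_0)\in X(t_0)$ and $X(t_0)$ is closed, we get $x_0\in X(t_0)$.

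\textbf{Step 2: the candidate trajectory.} Define $\gamma(t)=\Phi_{t,t_0}(x_0)$. Continuity of $\gamma$ follows from condition (c) with $s=t_0$. For any $s,t$, the cocycle property (b) gives $\Phi_{t,s}(\gamma(s))=\Phi_{t,s}\Phi_{s,t_0}(x_0)=\Phi_{t,t_0}(x_0)=\gamma(t)$, so $\gamma\in\Gamma$.

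\textbf{Step 3: pointwise convergence.} For each $t$, we have $\gamma_n(t)=\Phi_{t,t_0}(\gamma_n(t_0))$. Since $\Phi_{t,t_0}$ is continuous on $X(t_0)$ and $\gamma_n(t_0)\to x_0$, it follows that $\gamma_n(t)\to\gamma(t)$. So $\gamma_n\to\gamma$ pointwise on all of $T$, and in particular on every $[a,b]$.

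\textbf{Step 4: convergence in each interval metric.} This is the main step, and it is where lower semi-continuity replaces the strong continuity that we do not have. Fix a compact $[a,b]$ and $\epsilon>0$, and choose $N$ with $\mathcal{D}_{[a,b]}(\gamma_n,\gamma_m)<\epsilon$ for $n,m\ge N$. Fix $n\ge N$. Apply Definition~\ref{def:lsc_tms} with the fixed trajectory $\gamma_n$ and the sequence $\alpha_m=\gamma_m$, which converges pointwise to $\alpha=\gamma$ on $[a,b]$. This gives
\[
\mathcal{D}_{[a,b]}(\gamma_n,\gamma)\le\liminf_{m\to\infty}\mathcal{D}_{[a,b]}(\gamma_n,\gamma_m)\le\epsilon .
\]
Hence $\mathcal{D}_{[a,b]}(\gamma_n,\gamma)\to0$. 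As $[a,b]$ was arbitrary, $\gamma_n\to\gamma$ in $\Gamma$.

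\textbf{Step 5: uniqueness of the limit.} Suppose $\gamma_n\to\gamma'$ as well. Convergence on the degenerate interval $[t_0,t_0]$ gives $\gamma_n(t_0)\to\gamma'(t_0)$ in $X$. Limits in a metric space are unique, so $\gamma'(t_0)=x_0=\gamma(t_0)$. Lemma~\ref{lem:global_identity} then gives $\gamma'=\gamma$. Alternatively, apply the triangle inequality in any single $\mathcal{D}_{[a,b]}$.

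The only delicate points are the closedness of $X(t_0)$ in Step 1, which ensures $\Phi_{\cdot,t_0}$ is defined at $x_0$, and Step 4.
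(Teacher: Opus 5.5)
Your proposal is correct and follows the paper's proof step for step. You use the degenerate interval $[t_0,t_0]$ and the closedness of $X(t_0)$ to get the limit point, take $\Phi_{t,t_0}(x_0)$ as the candidate trajectory, and then use lower semi-continuity with $\gamma_n$ held fixed to upgrade pointwise convergence to convergence in each $\mathcal{D}_{[a,b]}$. The only cosmetic difference is in uniqueness: you compare at $t_0$ and invoke Lemma~\ref{lem:global_identity}, where the paper applies the triangle inequality on every degenerate interval $[t,t]$; both arguments are valid.
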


\begin{proof}
Let $\{\gamma_n\} \subset \Gamma$ be a Cauchy sequence. For $t_0 \in T$, the degenerate interval metric yields $\mathcal{D}_{[t_0, t_0]}(\gamma_n, \gamma_m) = d(\gamma_n(t_0), \gamma_m(t_0))$. Since $\{\gamma_n\}$ is Cauchy in $\Gamma$, $\{\gamma_n(t_0)\}$ is Cauchy in $(X, d)$. 

Because $(X, d)$ is complete, $\{\gamma_n(t_0)\}$ converges to some $x^* \in X$. Since $X(t_0)$ is closed in $X$, $x^* \in X(t_0)$. 

Define $\gamma: T \to X$ by:
$$
    \gamma(t) = \Phi_{t, t_0}(x^*)
$$

We verify $\gamma \in \Gamma$. For any $s \in T$, we have:
$$
    \gamma(t) = \Phi_{t, t_0}(x^*) = \Phi_{t, s}(\Phi_{s, t_0}(x^*)) = \Phi_{t, s}(\gamma(s)).
$$
Since the map $t \mapsto \Phi_{t, t_0}(x^*)$ is continuous, $\gamma \in C(T, X)$. Thus, $\gamma \in \Gamma$.

For any $t \in T$, $\gamma_n(t) = \Phi_{t, t_0}(\gamma_n(t_0))$. By the continuity of $\Phi_{t, t_0}: X(t_0) \to X(t)$, we have:
$$
    \lim_{n \to \infty} \gamma_n(t) = \lim_{n \to \infty} \Phi_{t, t_0}(\gamma_n(t_0)) = \Phi_{t, t_0}(x^*) = \gamma(t).
$$

To show $\gamma_n \to \gamma$ in $\Gamma$, let $[a, b] \subseteq T$. Since $\{\gamma_n\}$ is Cauchy, for any $\epsilon > 0$, there exists $N \in \mathbb{N}$ such that $\mathcal{D}_{[a, b]}(\gamma_n, \gamma_m) < \epsilon$ for all $n, m \geq N$. 

Fix $n \geq N$ and let $m \to \infty$. Since $\gamma_m \to \gamma$ pointwise on $[a, b]$, by lower semi-continuity we have:
$$
    \mathcal{D}_{[a, b]}(\gamma_n, \gamma) \leq \liminf_{m \to \infty} \mathcal{D}_{[a, b]}(\gamma_n, \gamma_m) \leq \epsilon.
$$
Thus, $\lim_{n \to \infty} \mathcal{D}_{[a, b]}(\gamma_n, \gamma) = 0$. 

To show uniqueness, suppose $\gamma_n$ converges to $\eta$ in $\Gamma$. For any $t \in T$,
$$
    d(\gamma(t), \eta(t)) = \mathcal{D}_{[t, t]}(\gamma, \eta) \leq \mathcal{D}_{[t, t]}(\gamma, \gamma_n) + \mathcal{D}_{[t, t]}(\gamma_n, \eta).
$$
As $n \to \infty$, both terms on the right converge to $0$, yielding $d(\gamma(t), \eta(t)) = 0$. Hence, $\gamma(t) = \eta(t)$ for all $t \in T$, and the limit $\gamma$ is unique.
\end{proof}

\begin{example}
\label{ex:necessity_lsc}
Let the base space be $X = \mathbb{R}^2$ equipped with the Euclidean metric $d(u,v) = \|u-v\|_2$, and let $T = \mathbb{R}$. We construct a temporal metric space over $X$ that fails to be lower semi-continuous and, in this specific construction, is incomplete. This demonstrates that the lower semi-continuity hypothesis in Theorem \ref{thm:inheritance} cannot be dropped, as the completeness of the base space $(X, d)$ alone is insufficient to guarantee the completeness of the trajectory space.

Define the temporal metric space $(\mathcal{X}, \Phi, \Gamma, \mathcal{D})$ by:
\begin{enumerate}
    \item $X(t) = \mathbb{R}^2$ for all $t \in T$.
    \item $\Phi_{t, s}(v) = R_{t-s}v$, where $R_\theta$ denotes the counterclockwise rotation of $\mathbb{R}^2$ by angle $\theta$ about the origin.
    \item $\Gamma = \{\gamma \in C(T, \mathbb{R}^2) \mid \gamma(t) = R_t p \text{ for some } p \in \mathbb{R}^2\}$.
  \item Define the function $\mathcal{E}: \mathbb{R}^2 \to \mathbb{R}$ by $\mathcal{E}(x, y) = x^2 / (x^2 + y^2)$ for $(x, y) \neq (0,0)$, and $\mathcal{E}(0, 0) = 0$. For any compact interval $[a, b] \subseteq T$ with length $\ell = b - a$, and for any trajectories $\alpha, \beta \in \Gamma$, define the temporal metric:
    $$
        \mathcal{D}_{[a, b]}(\alpha, \beta) = \|\alpha(b) - \beta(b)\|_2 + \ell \left| \sup_{t \in [a, b]} \mathcal{E}(\alpha(t)) - \sup_{t \in [a, b]} \mathcal{E}(\beta(t)) \right|.
    $$
\end{enumerate}

Non-negativity, symmetry, and the triangle inequality for $\mathcal{D}_{[a, b]}$ follow directly from the properties of the Euclidean norm and the supremum. If $\mathcal{D}_{[a, b]}(\alpha, \beta) = 0$, then $\|\alpha(b) - \beta(b)\|_2 = 0$, which implies $\alpha(b) = \beta(b)$. By Lemma \ref{lem:global_identity}, trajectories in $\Gamma$ are uniquely determined by their state at any single instant; thus, it follows that $\alpha = \beta$. Furthermore, for any fixed trajectory, the map $t \mapsto \mathcal{E}(\alpha(t))$ is continuous. This guarantees that the mapping $(a, b) \mapsto \mathcal{D}_{[a, b]}(\alpha, \beta)$ is continuous. Finally, on degenerate intervals where $a = b$, the length $\ell$ vanishes, and the temporal metric reduces to the spatial Euclidean distance $\mathcal{D}_{[t, t]}(\alpha, \beta) = \|\alpha(t) - \beta(t)\|_2$. Consequently, all axioms of a temporal metric space are satisfied.

To demonstrate the failure of lower semi-continuity, consider the compact interval $[0, \pi]$, yielding $\ell = \pi$. Define a sequence of trajectories $\alpha_m \in \Gamma$ uniquely determined by the state $\alpha_m(0) = (1/m, 0)$. As $m \to \infty$, $\alpha_m$ converges pointwise on $[0, \pi]$ to the zero trajectory $\alpha_0(t) \equiv (0,0)$. Let $\gamma \in \Gamma$ be the reference trajectory determined by $\gamma(0) = (1,0)$. 

Because both the sequence trajectories $\alpha_m$ and the reference trajectory $\gamma$ intersect the $x$-axis at non-zero points during the interval $[0, \pi]$, they achieve the maximal value of $\mathcal{E}$, yielding $\sup_{t \in [0, \pi]} \mathcal{E}(\alpha_m(t)) = \sup_{t \in [0, \pi]} \mathcal{E}(\gamma(t)) = 1$ for all $m \ge 1$. In contrast, the pointwise limit $\alpha_0$ remains entirely at the origin, where the function is explicitly defined as $\mathcal{E}(0,0) = 0$, resulting in $\sup_{t \in [0, \pi]} \mathcal{E}(\alpha_0(t)) = 0$. Evaluating the limit inferior of the sequence yields:

$$\liminf_{m \to \infty} \mathcal{D}_{[0, \pi]}(\gamma, \alpha_m) = \liminf_{m \to \infty} \left( \left\Vert{} (-1, 0) - (-1/m, 0) \right\Vert{}_2 + \pi \vert{}1 - 1\vert{} \right) = 1.$$
However, direct evaluation gives $\mathcal{D}_{[0, \pi]}(\gamma, \alpha_0) = 1 + \pi$. Since $1 + \pi > 1$, lower semi-continuity fails.

Furthermore, $\Gamma$ is incomplete. For $m, n \ge 1$, we have:
$$
    \mathcal{D}_{[0, \pi]}(\alpha_m, \alpha_n) = \|(-1/m, 0) - (-1/n, 0)\|_2 + \pi |1 - 1| = \left| \frac{1}{m} - \frac{1}{n} \right|.
$$
Since this distance vanishes as $m, n \to \infty$, the sequence $\{\alpha_m\}_{m=1}^\infty$ is Cauchy in $\Gamma$. Any limit of this sequence in $\Gamma$ must coincide with its pointwise limit $\alpha_0$. However, the temporal distance to $\alpha_0$ is:
$$
    \mathcal{D}_{[0, \pi]}(\alpha_m, \alpha_0) = \|(-1/m, 0) - (0, 0)\|_2 + \pi |1 - 0| = \frac{1}{m} + \pi.
$$
Taking the limit as $m \to \infty$ yields $\pi > 0$. Thus, the Cauchy sequence fails to converge in $\Gamma$, and the temporal metric space is incomplete.
\end{example}

\subsection{Sequential Compactness in the Trajectory Space}

To characterize sequentially compact subsets of the trajectory space $\Gamma$, we first define trajectory cross-sections and trajectory bundles at a fixed time.

\begin{definition}
Let $\mathcal{A} \subseteq \Gamma$ be a family of trajectories. For any fixed time $t \in T$, the cross-section of $\mathcal{A}$ at time $t$ is the subset of the base space defined by:
$$
    \mathcal{A}(t) = \{\alpha(t) \in X(t) \mid \alpha \in \mathcal{A}\} \subseteq X.
$$
Conversely, for any subset $C \subseteq X(t)$, the trajectory bundle passing through $C$ at time $t$ is the maximal set of trajectories intersecting $C$:
$$
    \Gamma(C, t) = \{\alpha \in \Gamma \mid \alpha(t) \in C\} \subseteq \Gamma.
$$
\end{definition}

\begin{definition}
A subset $\mathcal{A} \subseteq \Gamma$ is relatively sequentially compact if every sequence in $\mathcal{A}$ contains a subsequence converging to a trajectory in $\Gamma$. The subset $\mathcal{A}$ is sequentially compact if every sequence in $\mathcal{A}$ contains a subsequence converging to a trajectory in $\mathcal{A}$.
\end{definition}

By Lemma \ref{lem:global_identity}, a trajectory in $\Gamma$ is uniquely determined by its value at any fixed time $t_0 \in T$. Under the assumption of strong continuity, this property characterizes sequential compactness in $\Gamma$.

\begin{theorem}
\label{thm:characterization_compactness}
Let $(\mathcal{X}, \Phi, \Gamma, \mathcal{D})$ be a strongly continuous temporal metric space over $(X, d)$, and suppose there exists $t_0 \in T$ such that $X(t_0)$ is closed in $X$.
\begin{enumerate}
    \item[(i)] A subset $\mathcal{A} \subseteq \Gamma$ is relatively sequentially compact in $\Gamma$ if and only if its cross-section $\mathcal{A}(t_0)$ is relatively sequentially compact in $X$.
    \item[(ii)] A subset $\mathcal{A} \subseteq \Gamma$ is sequentially compact in $\Gamma$ if and only if its cross-section $\mathcal{A}(t_0)$ is sequentially compact in $X$.
\end{enumerate}
\end{theorem}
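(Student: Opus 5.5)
The plan is to transfer sequences back and forth between $\mathcal{A}$ and its cross-section $\mathcal{A}(t_0)$ using the evolution maps $\Phi_{t,t_0}$. Convergence at the single time $t_0$ will be upgraded to convergence in $\Gamma$ via strong continuity. The basic observation is the following. Suppose $\alpha_n,\alpha\in\Gamma$ and $\alpha_n(t_0)\to\alpha(t_0)$ in $X$. Then, as in the proof of Theorem \ref{thm:inheritance}, continuity of $\Phi_{t,t_0}$ gives
\[
\alpha_n(t)=\Phi_{t,t_0}(\alpha_n(t_0))\to\Phi_{t,t_0}(\alpha(t_0))=\alpha(t)
\]
for every $t\in T$. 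This is pointwise convergence on every $[a,b]$, so strong continuity yields $\mathcal{D}_{[a,b]}(\alpha_n,\alpha)\to0$ on every compact interval, i.e. $\alpha_n\to\alpha$ in $\Gamma$. Conversely, if $\alpha_n\to\alpha$ in $\Gamma$, the degenerate interval gives
\[
d(\alpha_n(t_0),\alpha(t_0))=\mathcal{D}_{[t_0,t_0]}(\alpha_n,\alpha)\to0 .
\]
So convergence in $\Gamma$ is equivalent to convergence of the $t_0$-values.

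For (i), first assume $\mathcal{A}$ is relatively sequentially compact and take a sequence $x_n\in\mathcal{A}(t_0)$. I choose $\alpha_n\in\mathcal{A}$ with $\alpha_n(t_0)=x_n$. Some subsequence $\alpha_{n_k}$ converges to some $\gamma\in\Gamma$, and then $x_{n_k}\to\gamma(t_0)\in X$. Conversely, assume $\mathcal{A}(t_0)$ is relatively sequentially compact and take $\alpha_n\in\mathcal{A}$. A subsequence satisfies $\alpha_{n_k}(t_0)\to x^*\in X$. Since $\alpha_{n_k}(t_0)\in X(t_0)$ and $X(t_0)$ is closed, $x^*\in X(t_0)$. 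Exactly as in Theorem \ref{thm:inheritance}, $\gamma(t)=\Phi_{t,t_0}(x^*)$ defines an element of $\Gamma$. By the observation above, $\alpha_{n_k}\to\gamma$ in $\Gamma$.

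For (ii) the argument is the same, with one addition: limits must stay inside the set. In the forward direction the limit $\gamma$ lies in $\mathcal{A}$, so $\gamma(t_0)\in\mathcal{A}(t_0)$. In the reverse direction we now have $x^*\in\mathcal{A}(t_0)$, so $x^*=\alpha(t_0)$ for some $\alpha\in\mathcal{A}$. By Lemma \ref{lem:global_identity}, the trajectory $\gamma$ built from $x^*$ equals $\alpha$, hence $\gamma\in\mathcal{A}$. The closedness of $X(t_0)$ is not even needed for this direction.

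The main point requiring care is the reverse direction of (i). There the limit point $x^*$ must lie in $X(t_0)$ so that $\Phi_{t,t_0}(x^*)$ is defined, which is exactly what closedness of $X(t_0)$ ensures. We also rely on strong continuity rather than lower semi-continuity to turn pointwise convergence into convergence in every $\mathcal{D}_{[a,b]}$. Lower semi-continuity alone would not suffice here, because without a Cauchy hypothesis there is no bound to pass to the limit.
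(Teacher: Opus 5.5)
Your proposal is correct and follows the paper's proof essentially step for step. You lift $t_0$-limits via $\gamma(t)=\Phi_{t,t_0}(x^*)$, using closedness of $X(t_0)$, and upgrade pointwise convergence to convergence in every $\mathcal{D}_{[a,b]}$ by strong continuity. You recover $t_0$-convergence from $\mathcal{D}_{[t_0,t_0]}$ and invoke Lemma \ref{lem:global_identity} to keep the limit inside $\mathcal{A}$ in (ii). Packaging these steps as a single equivalence between convergence in $\Gamma$ and convergence at $t_0$ is a tidy reorganization, and your remark that closedness of $X(t_0)$ is not needed in the reverse direction of (ii) is accurate.
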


\begin{proof}
(i) Suppose $\mathcal{A}(t_0)$ is relatively sequentially compact. Let $\{\alpha_n\}$ be a sequence in $\mathcal{A}$. The sequence $\{ \alpha_n(t_0) \}$ in $\mathcal{A}(t_0)$ contains a subsequence $\{ \alpha_{n_k}(t_0) \}$ converging to some $x^* \in X$. Since $X(t_0)$ is closed, $x^* \in X(t_0)$. Define $\alpha(t) = \Phi_{t, t_0}(x^*)$. Because $t \mapsto \Phi_{t, t_0}(x^*)$ is continuous, $\alpha \in \Gamma$. For any $t \in T$, the continuity of $\Phi_{t, t_0}$ implies $\lim_{k \to \infty} \alpha_{n_k}(t) = \Phi_{t, t_0}(x^*) = \alpha(t)$. By strong continuity, $\lim_{k \to \infty} \mathcal{D}_{[a, b]}(\alpha_{n_k}, \alpha) = 0$ for every compact interval $[a, b] \subseteq T$. Thus, $\alpha_{n_k} \to \alpha$ in $\Gamma$, and $\mathcal{A}$ is relatively sequentially compact.

Conversely, suppose $\mathcal{A}$ is relatively sequentially compact. Let $\{x_n\}$ be a sequence in $\mathcal{A}(t_0)$. Choose $\alpha_n \in \mathcal{A}$ such that $\alpha_n(t_0) = x_n$. There exists a subsequence $\{\alpha_{n_k}\}$ converging in $\Gamma$ to some $\alpha^* \in \Gamma$. Evaluating over the degenerate interval $[t_0, t_0]$ yields:
$$
    d(x_{n_k}, \alpha^*(t_0)) = \mathcal{D}_{[t_0, t_0]}(\alpha_{n_k}, \alpha^*) \to 0.
$$
Because $\alpha^* \in \Gamma$, we have $\alpha^*(t_0) \in X(t_0)$. Thus, $\{x_n\}$ has a convergent subsequence in $X(t_0)$, proving $\mathcal{A}(t_0)$ is relatively sequentially compact.

(ii) Suppose $\mathcal{A}(t_0)$ is sequentially compact. Let $\{\beta_n\}$ be a sequence in $\mathcal{A}$. The sequence $\{ \beta_n(t_0) \}$ in $\mathcal{A}(t_0)$ contains a subsequence $\{ \beta_{n_k}(t_0) \}$ converging to some $y^* \in \mathcal{A}(t_0)$. Define $\beta(t) = \Phi_{t, t_0}(y^*)$. Since $y^* \in X(t_0)$, it follows that $\beta \in \Gamma$. For any $t \in T$, the continuity of $\Phi_{t, t_0}$ implies $\lim_{k \to \infty} \beta_{n_k}(t) = \Phi_{t, t_0}(y^*) = \beta(t)$. By strong continuity, $\beta_{n_k} \to \beta$ in $\Gamma$. Because $y^* \in \mathcal{A}(t_0)$, there exists some $\eta \in \mathcal{A}$ such that $\eta(t_0) = y^* = \beta(t_0)$. By Lemma \ref{lem:global_identity}, $\beta = \eta$, ensuring $\beta \in \mathcal{A}$. Thus, $\mathcal{A}$ is sequentially compact.

Conversely, suppose $\mathcal{A}$ is sequentially compact. Let $\{x_n\}$ be a sequence in $\mathcal{A}(t_0)$, and choose $\alpha_n \in \mathcal{A}$ such that $\alpha_n(t_0) = x_n$. There exists a subsequence $\{\alpha_{n_k}\}$ converging in $\Gamma$ to some $\alpha^* \in \mathcal{A}$. Evaluating over the degenerate interval $[t_0, t_0]$ yields:
$$
    d(x_{n_k}, \alpha^*(t_0)) = \mathcal{D}_{[t_0, t_0]}(\alpha_{n_k}, \alpha^*) \to 0.
$$
Since $\alpha^* \in \mathcal{A}$, we have $\alpha^*(t_0) \in \mathcal{A}(t_0)$. Thus, $\{x_n\}$ has a convergent subsequence in $\mathcal{A}(t_0)$, proving $\mathcal{A}(t_0)$ is sequentially compact.
\end{proof}

\begin{example}
\label{ex:necessity_strong_continuity}
We construct a temporal metric space that satisfies lower semi-continuity but fails to be strongly continuous, in which a trajectory bundle over a sequentially compact cross-section fails to be sequentially compact.

Let $(X, d) = (\mathbb{R}, |\cdot|)$ and $T = \mathbb{R}$. Define the evolution family by $\Phi_{t, s}(x) = x$, yielding the trajectory space $\Gamma = \{\gamma_x \in C(T, \mathbb{R}) \mid \gamma_x(t) = x \text{ for all } t \in T \text{ and some } x \in \mathbb{R}\}$. Let $\delta(x, y)$ be the discrete metric on $\mathbb{R}$. For any compact interval $[a, b] \subseteq T$ with length $\ell = b - a$, define:
$$
    \mathcal{D}_{[a, b]}(\gamma_x, \gamma_y) = |x - y| + \ell \delta(x, y).
$$

Let $\gamma_z \in \Gamma$, and let a sequence $\gamma_{x_m}$ converge pointwise on $[a, b]$ to $\gamma_x$. If $x = z$, then $\mathcal{D}_{[a, b]}(\gamma_z, \gamma_x) = 0 \le \liminf_{m \to \infty} \mathcal{D}_{[a, b]}(\gamma_z, \gamma_{x_m})$. If $x \neq z$, then for sufficiently large $m$, $x_m \neq z$, yielding $\delta(z, x_m) = 1$. Thus:
$$
    \liminf_{m \to \infty} \mathcal{D}_{[a, b]}(\gamma_z, \gamma_{x_m}) = \liminf_{m \to \infty} (|z - x_m| + \ell) = |z - x| + \ell = \mathcal{D}_{[a, b]}(\gamma_z, \gamma_x).
$$
The space is lower semi-continuous.

Consider the interval $[0, 1]$ of length $\ell = 1$. The sequence $\gamma_{1/m}$ converges pointwise to $\gamma_0$. Evaluating the interval metric gives:
$$
    \lim_{m \to \infty} \mathcal{D}_{[0, 1]}(\gamma_{1/m}, \gamma_0) = \lim_{m \to \infty} \left( \left| \frac{1}{m} - 0 \right| + 1 \right) = 1 \neq 0.
$$
Thus, strong continuity fails.

Let $C = [0, 1]$, which is sequentially compact in $X$. Consider the trajectory bundle $\Gamma(C, 0)$ passing through $C$ at $t_0 = 0$. The sequence of trajectories $\{\gamma_{1/m}\}$ is contained in $\Gamma(C, 0)$. For any compact interval $[a, b]$ with length $\ell > 0$, and for any $m \neq n$:
$$
    \mathcal{D}_{[a, b]}(\gamma_{1/m}, \gamma_{1/n}) = \left| \frac{1}{m} - \frac{1}{n} \right| + \ell > \ell.
$$
Because the trajectories remain uniformly separated by at least $\ell > 0$ in the interval metric, no subsequence can be Cauchy. Consequently, no subsequence converges in $\Gamma$, proving the bundle $\Gamma(C, 0)$ is not relatively sequentially compact.
\end{example}

\subsection{Equivariance and Lifted Mappings}

Before constructing specific mappings between trajectory spaces, we formalize the general notion of continuity between them.

\begin{definition}
Let $(\mathcal{X}, \Phi, \Gamma, \mathcal{D})$ and $(\mathcal{Y}, \Psi, \Lambda, \mathcal{E})$ be temporal metric spaces. A mapping $F: \Gamma \to \Lambda$ is \textit{temporally continuous} if it preserves the sequential convergence of trajectories; that is, for any sequence $\{\alpha_n\}_{n=1}^\infty \subset \Gamma$ and trajectory $\alpha \in \Gamma$, the convergence $\alpha_n \to \alpha$ in $\Gamma$ implies $F(\alpha_n) \to F(\alpha)$ in $\Lambda$.
\end{definition}

We now lift mappings between base spaces to the trajectory spaces.

\begin{definition}
Let $(\mathcal{X}, \Phi, \Gamma, \mathcal{D})$ and $(\mathcal{Y}, \Psi, \Lambda, \mathcal{E})$ be temporal metric spaces over base spaces $X$ and $Y$, respectively, with a common time domain $T$. A mapping $h: X \to Y$ is dynamically equivariant if for all $t \in T$, $h(X(t)) \subseteq Y(t)$, and for all $s, t \in T$ and $x \in X(s)$:
$$
    h(\Phi_{t, s}(x)) = \Psi_{t, s}(h(x)).
$$
\end{definition}

\begin{definition}
Let $(\mathcal{X}, \Phi, \Gamma, \mathcal{D})$ and $(\mathcal{Y}, \Psi, \Lambda, \mathcal{E})$ be temporal metric spaces over base spaces $X$ and $Y$, respectively, with a common time domain $T$. Let $h: X \to Y$ be a continuous, dynamically equivariant mapping. The temporal map $\tilde{h}: \Gamma \to \Lambda$ induced by $h$ is defined for each $\alpha \in \Gamma$ by:
$$
    \tilde{h}(\alpha)(t) = h(\alpha(t)) \quad \text{for all } t \in T.
$$
\end{definition}

To verify $\tilde{h}$ is well-defined, let $\alpha \in \Gamma$. Since $\alpha$ and $h$ are continuous, the composition $\tilde{h}(\alpha)$ is in $C(T, Y)$. By dynamical equivariance, for all $s, t \in T$:
$$
    \tilde{h}(\alpha)(t) = h(\alpha(t)) = h(\Phi_{t, s}(\alpha(s))) = \Psi_{t, s}(h(\alpha(s))) = \Psi_{t, s}(\tilde{h}(\alpha)(s)).
$$
Thus, $\tilde{h}(\alpha) \in \Lambda$.

\begin{theorem}
\label{thm:lifting_continuity}
Let $(\mathcal{X}, \Phi, \Gamma, \mathcal{D})$ and $(\mathcal{Y}, \Psi, \Lambda, \mathcal{E})$ be temporal metric spaces over base spaces $X$ and $Y$, respectively, with a common time domain $T$. Let $h: X \to Y$ be a continuous, dynamically equivariant mapping. If the target space is strongly continuous, then the temporal map $\tilde{h}: \Gamma \to \Lambda$ induced by $h$ is temporally continuous.
\end{theorem}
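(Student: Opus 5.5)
The plan is to reduce temporal convergence in $\Lambda$ to pointwise convergence of the image trajectories, and then invoke strong continuity of the target space. Let $\{\alpha_n\} \subset \Gamma$ and $\alpha \in \Gamma$ with $\alpha_n \to \alpha$ in $\Gamma$. Fix a compact interval $[a,b] \subseteq T$. By Definition \ref{def:strong_tms} applied in $(\mathcal{Y}, \Psi, \Lambda, \mathcal{E})$, it suffices to show that $\tilde{h}(\alpha_n)(t) \to \tilde{h}(\alpha)(t)$ in $Y$ for every $t \in [a,b]$. Strong continuity then gives $\mathcal{E}_{[a,b]}(\tilde{h}(\alpha_n), \tilde{h}(\alpha)) \to 0$. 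Since $[a,b]$ was arbitrary, this yields $\tilde{h}(\alpha_n) \to \tilde{h}(\alpha)$ in $\Lambda$. Note that $\tilde{h}(\alpha_n), \tilde{h}(\alpha) \in \Lambda$ by the well-definedness check preceding the theorem, so strong continuity applies to them.

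The key step is extracting pointwise convergence in $X$ from temporal convergence in $\Gamma$. For this I will use the degenerate-interval axiom. For any $t \in T$, the interval $[t,t]$ is compact, so convergence in $\Gamma$ includes convergence on $[t,t]$. Hence
\[
d(\alpha_n(t), \alpha(t)) = \mathcal{D}_{[t,t]}(\alpha_n, \alpha) \to 0 .
\]
Thus $\alpha_n(t) \to \alpha(t)$ in $X$ for every $t \in T$. Continuity of $h: X \to Y$ then gives $h(\alpha_n(t)) \to h(\alpha(t))$, that is, $\tilde{h}(\alpha_n)(t) \to \tilde{h}(\alpha)(t)$ pointwise on all of $T$, and in particular on $[a,b]$.

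I expect no real obstacle. The only point needing care is confirming that the degenerate intervals $[t,t]$ count among the compact intervals in the definition of convergence in $\Gamma$. They do, as closed subintervals of $T$, and the paper already uses this in the uniqueness part of Theorem \ref{thm:inheritance} and in Theorem \ref{thm:characterization_compactness}. It is worth remarking that no hypothesis on the source space (such as strong continuity or lower semi-continuity) is needed, because the degenerate-interval axiom always lets temporal convergence control pointwise convergence. The strong-continuity hypothesis on the target is exactly what converts pointwise convergence back into interval-metric convergence.
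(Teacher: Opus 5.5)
Your proposal is correct and follows the same route as the paper. You evaluate on the degenerate interval $[t,t]$ to get pointwise convergence in $X$, use continuity of $h$ to carry it to $Y$, and then invoke strong continuity of the target to upgrade it to convergence under every $\mathcal{E}_{[a,b]}$. Your remarks that $\tilde{h}(\alpha_n)\in\Lambda$ and that no hypothesis on the source space is needed are accurate points the paper leaves implicit.
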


\begin{proof}
Let $\{\alpha_n\}$ be a sequence converging to $\alpha$ in $\Gamma$. Evaluating over the degenerate interval $[t, t]$ yields:
$$
    d(\alpha_n(t), \alpha(t)) = \mathcal{D}_{[t, t]}(\alpha_n, \alpha) \to 0.
$$
Thus, $\alpha_n(t) \to \alpha(t)$ for all $t \in T$. By the continuity of $h$, $h(\alpha_n(t)) \to h(\alpha(t))$ in $Y$, which by definition means $\tilde{h}(\alpha_n)(t) \to \tilde{h}(\alpha)(t)$ for all $t \in T$. Because the target space is strongly continuous, this pointwise convergence implies $\tilde{h}(\alpha_n) \to \tilde{h}(\alpha)$ in $\Lambda$. Thus, $\tilde{h}$ is temporally continuous.
\end{proof}

\begin{corollary}
Let $(\mathcal{X}, \Phi, \Gamma, \mathcal{D})$ and $(\mathcal{Y}, \Psi, \Lambda, \mathcal{E})$ be strongly continuous temporal metric spaces over base spaces $X$ and $Y$, respectively, with a common time domain $T$. Let $h: X \to Y$ be a dynamically equivariant homeomorphism. Then the temporal map $\tilde{h}: \Gamma \to \Lambda$ induced by $h$ is a temporal homeomorphism.
\end{corollary}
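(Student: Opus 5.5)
The plan is to show three things: $\tilde{h}$ is a bijection from $\Gamma$ onto $\Lambda$; its inverse is the lift $\widetilde{h^{-1}}$ of $h^{-1}$; and both $\tilde{h}$ and $\widetilde{h^{-1}}$ are temporally continuous by Theorem \ref{thm:lifting_continuity}. The term "temporal homeomorphism" is understood as a bijection $\Gamma \to \Lambda$ that is temporally continuous and has a temporally continuous inverse.

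\textbf{Step 1: $h^{-1}$ is continuous and dynamically equivariant in the reverse direction.} Continuity is given, since $h$ is a homeomorphism. Equivariance requires the fibers to correspond, so I will read the hypothesis as $h(X(t)) = Y(t)$ for all $t$; this is the natural meaning of an equivariant homeomorphism of temporal spaces, and I will state it explicitly. Under this reading, for $y \in Y(s)$ we write $y = h(x)$ with $x \in X(s)$. Then $\Psi_{t,s}(y) = \Psi_{t,s}(h(x)) = h(\Phi_{t,s}(x))$, and applying $h^{-1}$ gives $h^{-1}(\Psi_{t,s}(y)) = \Phi_{t,s}(h^{-1}(y))$. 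Also $h^{-1}(Y(t)) \subseteq X(t)$. So the lift $\widetilde{h^{-1}}: \Lambda \to \Gamma$ is well defined, by the verification that follows the definition of the induced map.

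\textbf{Step 2: the two lifts are mutually inverse.} For $\alpha \in \Gamma$ we have $(\widetilde{h^{-1}} \circ \tilde{h})(\alpha)(t) = h^{-1}(h(\alpha(t))) = \alpha(t)$ pointwise. The same computation works in the other order, so $\tilde{h}$ is a bijection with inverse $\widetilde{h^{-1}}$.

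\textbf{Step 3: continuity.} Apply Theorem \ref{thm:lifting_continuity} twice. First, with target $\Lambda$, which is strongly continuous, we get that $\tilde{h}$ is temporally continuous. Second, with the roles reversed, so that the target is $\Gamma$, also strongly continuous by hypothesis, we get that $\widetilde{h^{-1}}$ is temporally continuous. This is why the corollary assumes both spaces are strongly continuous.

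The main obstacle is Step 1. With only the inclusion $h(X(t)) \subseteq Y(t)$ from the definition of dynamic equivariance, $\tilde{h}$ need not be surjective onto $\Lambda$, and $h^{-1}$ need not map $Y(t)$ into $X(t)$. I expect to handle this by interpreting the hypothesis as equality of fibers. If one insists on the literal inclusion, an alternative is to show surjectivity directly. Given $\lambda \in \Lambda$, the natural preimage is $\Phi_{t,t_0}(h^{-1}(\lambda(t_0)))$, and this requires $h^{-1}(\lambda(t_0)) \in X(t_0)$, which is exactly the fiber-equality condition. So I would make this requirement explicit rather than try to avoid it.
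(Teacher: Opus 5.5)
Your argument is the same as the paper's: bijectivity of $\tilde{h}$ with inverse $\widetilde{h^{-1}}$, equivariance of $h^{-1}$, and Theorem~\ref{thm:lifting_continuity} applied once with each space as the target.

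Your fiber caveat is correct, and the paper's proof relies on it without saying so. Its identity $h(\Phi_{t,s}(h^{-1}(y))) = \Psi_{t,s}(y)$ and its claim that surjectivity ``follows symmetrically'' both need $h^{-1}(Y(s)) \subseteq X(s)$. With only the inclusion $h(X(t)) \subseteq Y(t)$ the statement fails: take $h = \mathrm{Id}_{\mathbb{R}}$, trivial flows, $X(t) = [0,1]$, $Y(t) = \mathbb{R}$ and sup metrics, and $\tilde{h}$ is a non-surjective inclusion.
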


\begin{proof}
Because $h$ is bijective, the pointwise definition of $\tilde{h}$ ensures it is bijective. Specifically, $\tilde{h}(\alpha) = \tilde{h}(\beta)$ implies $h(\alpha(t)) = h(\beta(t))$, yielding $\alpha(t) = \beta(t)$ for all $t \in T$. Surjectivity follows symmetrically, meaning the inverse map $\tilde{h}^{-1}: \Lambda \to \Gamma$ is the temporal map induced by $h^{-1}$.

The inverse $h^{-1}: Y \to X$ is dynamically equivariant, as $h(\Phi_{t, s}(h^{-1}(y))) = \Psi_{t, s}(y)$ yields $\Phi_{t, s}(h^{-1}(y)) = h^{-1}(\Psi_{t, s}(y))$. 

Because $h$ and $h^{-1}$ are both continuous and dynamically equivariant, applying Theorem \ref{thm:lifting_continuity} to both mappings establishes that $\tilde{h}$ and $\tilde{h}^{-1}$ are temporally continuous. Thus, $\tilde{h}$ is a temporal homeomorphism.
\end{proof}

\begin{example}
Let $T = \mathbb{R}$, and let the base spaces $(X, d)$ and $(Y, \rho)$ both be $\mathbb{R}^2$ equipped with the usual Euclidean metric.

Define a temporal metric space $(\mathcal{X}, \Phi, \Gamma, \mathcal{D})$ over $X$ by:
\begin{enumerate}
    \item $X(t) = \mathbb{R}^2$ for all $t \in T$.
    \item $\Phi_{t, s}(x) = e^{t-s}x$ for all $s, t \in T$.
    \item $\Gamma = \{\gamma \in C(T, \mathbb{R}^2) \mid \gamma(t) = e^t p \text{ for some } p \in \mathbb{R}^2\}$.
    \item $\mathcal{D}_{[a, b]}(\alpha, \beta) = \sup_{t \in [a, b]} d(\alpha(t), \beta(t))$.
\end{enumerate}
Uniform convergence on compact intervals preserves continuous limits; therefore, $\mathcal{X}$ is strongly continuous.

Define a temporal metric space $(\mathcal{Y}, \Psi, \Lambda, \mathcal{E})$ over $Y$ with identical fibers $Y(t) = \mathbb{R}^2$, evolution family $\Psi_{t, s}(y) = e^{t-s}y$, and trajectory space $\Lambda = \Gamma$. Let the temporal metric $\mathcal{E}_{[a, b]}$ be defined as in Example \ref{ex:paris_metric}. As established, $\mathcal{Y}$ is not strongly continuous.

Define the identity mapping $h: X \to Y$ by $h(x) = x$. The mapping $h$ is continuous on the base spaces and is dynamically equivariant:
$$
    h(\Phi_{t, s}(x)) = e^{t-s}x = \Psi_{t, s}(h(x)).
$$

Consider the sequence of trajectories $\alpha_m \in \Gamma$ determined by $\alpha_m(0) = (1, 1/m)$, and the reference trajectory $\alpha \in \Gamma$ determined by $\alpha(0) = (1, 0)$. Let $[a, b] \subseteq T$ be an arbitrary compact interval, we have

\begin{align*}
    \lim_{m \to \infty} \mathcal{D}_{[a, b]}(\alpha_m, \alpha) &= \lim_{m \to \infty} \sup_{t \in [a, b]} d(\alpha_m(t), \alpha(t)) \\ 
    &= \lim_{m \to \infty} \sup_{t \in [a, b]} e^t d\left(\left(1, \frac{1}{m}\right), (1, 0)\right) = \lim_{m \to \infty} \frac{e^b}{m} = 0.
\end{align*}

Because this holds for every compact interval $[a, b]$, $\alpha_m \to \alpha$ in $\Gamma$.

The induced temporal map $\tilde{h}: \Gamma \to \Lambda$ evaluates identically as $\tilde{h}(\gamma)(t) = \gamma(t)$. Consequently, $\tilde{h}(\alpha_m) = \alpha_m$ and $\tilde{h}(\alpha) = \alpha$. As in Example \ref{ex:paris_metric}, for the compact interval $[0, 1]$, we have
$$
    \lim_{m \to \infty} \mathcal{E}_{[0, 1]}(\tilde{h}(\alpha_m), \tilde{h}(\alpha)) = 2e \neq 0.
$$
The sequence converges in $\mathcal{X}$ but fails to converge in $\mathcal{Y}$. Thus, the lifted map $\tilde{h}$ is discontinuous, proving that strong continuity of the target space is required in Theorem \ref{thm:lifting_continuity} for the lifted map to be continuous.
\end{example}

\section{The Temporal Hyperspace}

We now construct a topology over sets of trajectories. Throughout this work, hyperspaces are formulated using sequential compactness to align with the ambient trajectory topology.

\begin{definition}
Let $(\mathcal{X}, \Phi, \Gamma, \mathcal{D})$ be a temporal metric space over a base space $X$ with time domain $T$. The temporal hyperspace, denoted $\mathcal{K}(\Gamma)$, is the collection of all non-empty, sequentially compact subsets of $\Gamma$. For any $t \in T$, the hyperspace of the cross-section $X(t)$, denoted $\mathcal{K}(X(t))$, is the collection of all non-empty, sequentially compact subsets of $X(t)$.
\end{definition}

By Theorem \ref{thm:characterization_compactness}, in strongly continuous spaces, elements of $\mathcal{K}(\Gamma)$ coincide exactly with trajectory bundles emanating from sequentially compact cross-sections. This establishes a strict identification between the temporal hyperspace and the hyperspace of the cross-section:

\begin{theorem}
\label{thm:hyperspace_reduction}
Let $(\mathcal{X}, \Phi, \Gamma, \mathcal{D})$ be a strongly continuous temporal metric space over a base space $X$ with time domain $T$. If there exists $t_0 \in T$ such that $X(t_0)$ is closed in $X$, then the map $\Theta: \mathcal{K}(\Gamma) \to \mathcal{K}(X(t_0))$ defined by $\Theta(\mathcal{A}) = \mathcal{A}(t_0)$ is a bijection.
\end{theorem}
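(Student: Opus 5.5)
The plan is to verify three things in turn: that $\Theta$ is well defined, that it is injective, and that it is surjective. Each step rests on Theorem \ref{thm:characterization_compactness}(ii) and Lemma \ref{lem:global_identity}.

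\emph{Well-definedness.} Let $\mathcal{A} \in \mathcal{K}(\Gamma)$. Then $\mathcal{A}(t_0)$ is non-empty because $\mathcal{A}$ is non-empty. It is contained in $X(t_0)$ because every trajectory in $\Gamma$ satisfies $\alpha(t_0) = \Phi_{t_0,t_0}(\alpha(t_0)) \in X(t_0)$. By Theorem \ref{thm:characterization_compactness}(ii), applicable since the space is strongly continuous and $X(t_0)$ is closed, $\mathcal{A}(t_0)$ is sequentially compact in $X$. Its subsequential limits already lie in $\mathcal{A}(t_0) \subseteq X(t_0)$, so it is also sequentially compact as a subset of $X(t_0)$. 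Hence $\Theta(\mathcal{A}) \in \mathcal{K}(X(t_0))$.

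\emph{Injectivity.} Suppose $\mathcal{A}(t_0) = \mathcal{B}(t_0)$ and let $\alpha \in \mathcal{A}$. Since $\alpha(t_0) \in \mathcal{B}(t_0)$, there is some $\beta \in \mathcal{B}$ with $\beta(t_0) = \alpha(t_0)$. Lemma \ref{lem:global_identity} then gives $\alpha = \beta \in \mathcal{B}$, so $\mathcal{A} \subseteq \mathcal{B}$. The reverse inclusion follows by symmetry.

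\emph{Surjectivity.} Given $C \in \mathcal{K}(X(t_0))$, I take the trajectory bundle $\mathcal{A} = \Gamma(C, t_0)$. For each $x \in C \subseteq X(t_0)$, the curve $\gamma_x(t) = \Phi_{t,t_0}(x)$ lies in $\Gamma$: continuity follows from axiom (c), and the cocycle identity follows from (b). Since $\gamma_x(t_0) = x$, we get $\mathcal{A}(t_0) = C$ exactly, and in particular $\mathcal{A}$ is non-empty. Sequential compactness of $C$ in $X(t_0)$ gives sequential compactness in $X$, since convergence in $X(t_0)$ is convergence in $X$ under the subspace metric. Theorem \ref{thm:characterization_compactness}(ii) therefore yields $\mathcal{A} \in \mathcal{K}(\Gamma)$, and $\Theta(\mathcal{A}) = C$.

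I do not expect a serious obstacle; the substantive work is already done in Theorem \ref{thm:characterization_compactness}. The only points needing care are the routine ones noted above: existence of a trajectory through each point of $C$ in the surjectivity step, and the passage between sequential compactness in $X(t_0)$ and in $X$. As a byproduct, the argument shows that the inverse map is $C \mapsto \Gamma(C, t_0)$.
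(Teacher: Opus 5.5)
Your proposal is correct and follows the same route as the paper. Well-definedness and surjectivity both come from Theorem~\ref{thm:characterization_compactness}(ii), applied to $\mathcal{A}$ and to the bundle $\Gamma(C,t_0)$, and injectivity comes from $\mathcal{A}=\Gamma(\mathcal{A}(t_0),t_0)$, which you rightly trace to Lemma~\ref{lem:global_identity}, while your explicit trajectories $\gamma_x(t)=\Phi_{t,t_0}(x)$ fill in the step $\Gamma(C,t_0)(t_0)=C$ that the paper only asserts.
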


\begin{proof}
By Theorem \ref{thm:characterization_compactness}, for any $\mathcal{A} \in \mathcal{K}(\Gamma)$, its cross-section $\mathcal{A}(t_0)$ is sequentially compact, ensuring $\Theta$ is well-defined. To see that $\Theta$ is surjective, let $C \in \mathcal{K}(X(t_0))$. Theorem \ref{thm:characterization_compactness} guarantees that the trajectory bundle $\Gamma(C, t_0) \in \mathcal{K}(\Gamma)$ and its cross-section is precisely $C$; hence $\Theta(\Gamma(C, t_0)) = C$. To see that $\Theta$ is injective, suppose $\Theta(\mathcal{A}) = \Theta(\mathcal{B})$ for some $\mathcal{A}, \mathcal{B} \in \mathcal{K}(\Gamma)$, meaning $\mathcal{A}(t_0) = \mathcal{B}(t_0)$. By Theorem \ref{thm:characterization_compactness}, every compact trajectory set is strictly determined by its cross-section, so $\mathcal{A} = \Gamma(\mathcal{A}(t_0), t_0) = \Gamma(\mathcal{B}(t_0), t_0) = \mathcal{B}$. Thus, $\Theta$ is a bijection.
\end{proof}

\begin{definition}
Let $\mathcal{A}, \mathcal{B} \in \mathcal{K}(\Gamma)$. For any compact interval $[a, b] \subseteq T$, the directed temporal Hausdorff distance from $\mathcal{A}$ to $\mathcal{B}$ is:
$$
    \vec{\mathcal{H}}_{[a, b]}(\mathcal{A}, \mathcal{B}) = \sup_{\alpha \in \mathcal{A}} \inf_{\beta \in \mathcal{B}} \mathcal{D}_{[a, b]}(\alpha, \beta).
$$
The temporal Hausdorff metric over $[a, b]$ is the symmetrized maximum:
$$
    \mathcal{H}_{[a, b]}(\mathcal{A}, \mathcal{B}) = \max \left\{ \vec{\mathcal{H}}_{[a, b]}(\mathcal{A}, \mathcal{B}), \vec{\mathcal{H}}_{[a, b]}(\mathcal{B}, \mathcal{A}) \right\}.
$$
\end{definition}

Because $\mathcal{D}_{[a, b]}$ is continuous on $\Gamma \times \Gamma$ and the trajectory sets $\mathcal{A}$ and $\mathcal{B}$ are sequentially compact, the infimum and supremum are achieved. The family of metrics $\mathcal{H} = \{\mathcal{H}_{[a, b]}\}$ generates the topology of the temporal hyperspace.

\begin{proposition}
For any compact interval $[a, b] \subseteq T$, the temporal Hausdorff metric $\mathcal{H}_{[a, b]}$ is a true metric on $\mathcal{K}(\Gamma)$.
\end{proposition}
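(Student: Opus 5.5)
The plan is to reduce everything to the classical Hausdorff metric on compact subsets of a single metric space, namely $(\Gamma, \mathcal{D}_{[a,b]})$ for the fixed interval $[a,b]$. The bridge is this observation. If $\gamma_n \to \gamma$ in $\Gamma$ (convergence on every compact interval), then in particular $\mathcal{D}_{[a,b]}(\gamma_n,\gamma) \to 0$. Hence every $\mathcal{A} \in \mathcal{K}(\Gamma)$ is also sequentially compact as a subset of the metric space $(\Gamma, \mathcal{D}_{[a,b]})$. In a metric space, sequential compactness gives compactness, so $\mathcal{A}$ is totally bounded and closed with respect to $\mathcal{D}_{[a,b]}$. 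I expect this transfer to be the main point needing care: all the properties we need (finiteness, attainment of inf/sup, closedness) are stated for the single metric $\mathcal{D}_{[a,b]}$, whereas membership in $\mathcal{K}(\Gamma)$ is defined through the stronger multi-interval convergence.

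\textbf{Finiteness and nonnegativity.} Pick $\alpha_0 \in \mathcal{A}$ and $\beta_0 \in \mathcal{B}$. Boundedness of $\mathcal{A}$ and $\mathcal{B}$ in $\mathcal{D}_{[a,b]}$ and the triangle inequality give
\[
\mathcal{D}_{[a,b]}(\alpha,\beta) \le \operatorname{diam}\mathcal{A} + \mathcal{D}_{[a,b]}(\alpha_0,\beta_0) + \operatorname{diam}\mathcal{B}
\]
for all $\alpha \in \mathcal{A}$ and $\beta \in \mathcal{B}$. So $\mathcal{H}_{[a,b]}$ takes values in $[0,\infty)$. Symmetry holds by construction, since $\mathcal{H}_{[a,b]}$ is a symmetrized maximum.

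\textbf{Identity of indiscernibles.} If $\mathcal{A} = \mathcal{B}$, each $\alpha \in \mathcal{A}$ has $\inf_{\beta}\mathcal{D}_{[a,b]}(\alpha,\beta) = 0$, so $\mathcal{H}_{[a,b]}(\mathcal{A},\mathcal{B}) = 0$. Conversely, suppose $\mathcal{H}_{[a,b]}(\mathcal{A},\mathcal{B}) = 0$ and let $\alpha \in \mathcal{A}$. Then $\inf_{\beta \in \mathcal{B}}\mathcal{D}_{[a,b]}(\alpha,\beta) = 0$, so there are $\beta_n \in \mathcal{B}$ with $\mathcal{D}_{[a,b]}(\beta_n,\alpha) \to 0$. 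By sequential compactness of $\mathcal{B}$ in $\Gamma$, a subsequence converges in $\Gamma$ to some $\beta^* \in \mathcal{B}$. In particular it converges to $\beta^*$ in $\mathcal{D}_{[a,b]}$. Uniqueness of limits in the metric space $(\Gamma,\mathcal{D}_{[a,b]})$ then gives $\alpha = \beta^* \in \mathcal{B}$. Thus $\mathcal{A} \subseteq \mathcal{B}$, and $\mathcal{B} \subseteq \mathcal{A}$ follows by symmetry.

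\textbf{Triangle inequality.} This is the standard argument. For $\alpha \in \mathcal{A}$, the minimizers exist by compactness, so choose $\beta \in \mathcal{B}$ with $\mathcal{D}_{[a,b]}(\alpha,\beta) \le \vec{\mathcal{H}}_{[a,b]}(\mathcal{A},\mathcal{B})$. Then for every $\gamma \in \mathcal{C}$,
\[
\inf_{\gamma' \in \mathcal{C}}\mathcal{D}_{[a,b]}(\alpha,\gamma') \le \mathcal{D}_{[a,b]}(\alpha,\beta) + \inf_{\gamma' \in \mathcal{C}}\mathcal{D}_{[a,b]}(\beta,\gamma') \le \vec{\mathcal{H}}_{[a,b]}(\mathcal{A},\mathcal{B}) + \vec{\mathcal{H}}_{[a,b]}(\mathcal{B},\mathcal{C}).
\]
If one prefers not to use attainment, an $\varepsilon$-approximate $\beta$ works equally well. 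Taking the supremum over $\alpha \in \mathcal{A}$ gives $\vec{\mathcal{H}}_{[a,b]}(\mathcal{A},\mathcal{C}) \le \vec{\mathcal{H}}_{[a,b]}(\mathcal{A},\mathcal{B}) + \vec{\mathcal{H}}_{[a,b]}(\mathcal{B},\mathcal{C})$. The same argument gives the reversed directed inequality. Bounding each directed term by $\mathcal{H}_{[a,b]}$ yields $\mathcal{H}_{[a,b]}(\mathcal{A},\mathcal{C}) \le \mathcal{H}_{[a,b]}(\mathcal{A},\mathcal{B}) + \mathcal{H}_{[a,b]}(\mathcal{B},\mathcal{C})$.
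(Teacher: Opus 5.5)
Your proof is correct and follows essentially the same route as the paper: the standard directed-distance argument for the triangle inequality, and, for the identity of indiscernibles, the fact that sequential compactness in $\Gamma$ forces the infimum to be attained (or limits to agree) in $\mathcal{D}_{[a,b]}$. Your explicit finiteness check, via boundedness of $\mathcal{A}$ and $\mathcal{B}$ in $(\Gamma,\mathcal{D}_{[a,b]})$, and your transfer of sequential compactness to the single metric $\mathcal{D}_{[a,b]}$ usefully spell out what the paper leaves implicit when it remarks that the infimum and supremum are achieved.
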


\begin{proof}
Non-negativity and symmetry follow immediately from the definition of the symmetrized maximum and the properties of $\mathcal{D}_{[a, b]}$. To verify the triangle inequality, let $\mathcal{A}, \mathcal{B}, \mathcal{C} \in \mathcal{K}(\Gamma)$. For any $\alpha \in \mathcal{A}$, $\beta \in \mathcal{B}$, and $\gamma \in \mathcal{C}$, the triangle inequality of the underlying temporal metric yields:
$$
    \mathcal{D}_{[a, b]}(\alpha, \gamma) \le \mathcal{D}_{[a, b]}(\alpha, \beta) + \mathcal{D}_{[a, b]}(\beta, \gamma).
$$
Taking the infimum over all $\gamma \in \mathcal{C}$, we obtain:
$$
    \inf_{\gamma \in \mathcal{C}} \mathcal{D}_{[a, b]}(\alpha, \gamma) \le \mathcal{D}_{[a, b]}(\alpha, \beta) + \inf_{\gamma \in \mathcal{C}} \mathcal{D}_{[a, b]}(\beta, \gamma) \le \mathcal{D}_{[a, b]}(\alpha, \beta) + \vec{\mathcal{H}}_{[a, b]}(\mathcal{B}, \mathcal{C}).
$$
Taking the infimum over $\beta \in \mathcal{B}$ on the right side:
$$
    \inf_{\gamma \in \mathcal{C}} \mathcal{D}_{[a, b]}(\alpha, \gamma) \le \inf_{\beta \in \mathcal{B}} \mathcal{D}_{[a, b]}(\alpha, \beta) + \vec{\mathcal{H}}_{[a, b]}(\mathcal{B}, \mathcal{C}).
$$
Taking the supremum over all $\alpha \in \mathcal{A}$ gives the directed triangle inequality:
$$
    \vec{\mathcal{H}}_{[a, b]}(\mathcal{A}, \mathcal{C}) \le \vec{\mathcal{H}}_{[a, b]}(\mathcal{A}, \mathcal{B}) + \vec{\mathcal{H}}_{[a, b]}(\mathcal{B}, \mathcal{C}).
$$
By symmetric reasoning, the reverse directed distance satisfies:
$$
    \vec{\mathcal{H}}_{[a, b]}(\mathcal{C}, \mathcal{A}) \le \vec{\mathcal{H}}_{[a, b]}(\mathcal{C}, \mathcal{B}) + \vec{\mathcal{H}}_{[a, b]}(\mathcal{B}, \mathcal{A}).
$$
By definition of the symmetric maximum, $\vec{\mathcal{H}}_{[a, b]}(\mathcal{M}, \mathcal{N}) \le \mathcal{H}_{[a, b]}(\mathcal{M}, \mathcal{N})$ for any $\mathcal{M}, \mathcal{N} \in \mathcal{K}(\Gamma)$. Applying this to the right sides of both inequalities yields:
\begin{align*}
    \vec{\mathcal{H}}_{[a, b]}(\mathcal{A}, \mathcal{C}) &\le \mathcal{H}_{[a, b]}(\mathcal{A}, \mathcal{B}) + \mathcal{H}_{[a, b]}(\mathcal{B}, \mathcal{C}), \\
    \vec{\mathcal{H}}_{[a, b]}(\mathcal{C}, \mathcal{A}) &\le \mathcal{H}_{[a, b]}(\mathcal{B}, \mathcal{C}) + \mathcal{H}_{[a, b]}(\mathcal{A}, \mathcal{B}).
\end{align*}
Because both $\vec{\mathcal{H}}_{[a, b]}(\mathcal{A}, \mathcal{C})$ and $\vec{\mathcal{H}}_{[a, b]}(\mathcal{C}, \mathcal{A})$ are less than or equal to the exact same sum, their maximum must also satisfy this inequality, yielding the triangle inequality:
$$
    \mathcal{H}_{[a, b]}(\mathcal{A}, \mathcal{C}) \le \mathcal{H}_{[a, b]}(\mathcal{A}, \mathcal{B}) + \mathcal{H}_{[a, b]}(\mathcal{B}, \mathcal{C}).
$$

Suppose $\mathcal{H}_{[a, b]}(\mathcal{A}, \mathcal{B}) = 0$, which implies both directed distances are zero. Because $\vec{\mathcal{H}}_{[a, b]}(\mathcal{A}, \mathcal{B}) = 0$ and the sets are sequentially compact, for any $\alpha \in \mathcal{A}$ there exists $\beta \in \mathcal{B}$ such that $\mathcal{D}_{[a, b]}(\alpha, \beta) = 0$. By the definition of a temporal metric space, $\mathcal{D}_{[a, b]}$ is a true metric on the trajectory space $\Gamma$, which strictly requires $\alpha = \beta$. Thus, $\alpha \in \mathcal{B}$, establishing $\mathcal{A} \subseteq \mathcal{B}$. By symmetric reasoning, $\vec{\mathcal{H}}_{[a, b]}(\mathcal{B}, \mathcal{A}) = 0$ yields $\mathcal{B} \subseteq \mathcal{A}$. Thus, $\mathcal{A} = \mathcal{B}$, confirming $\mathcal{H}_{[a, b]}$ is a metric.
\end{proof}

\begin{definition}
A sequence $\{\mathcal{A}_n\}$ in $\mathcal{K}(\Gamma)$ converges to a limit set $\mathcal{A}^* \in \mathcal{K}(\Gamma)$ if, for every compact interval $J \subseteq T$:
$$
\lim_{n \to \infty} \mathcal{H}_J(\mathcal{A}_n, \mathcal{A}^*) = 0.$$
\end{definition}

For a deterministic dynamical fractal to emerge as a unique, stable limit of an iterative process, the hyperspace hosting the process must be complete. The following theorem establishes that completeness seamlessly lifts from the trajectory space to the hyperspace.

\begin{theorem}
\label{thm:hyperspace_completeness}
Let $(\mathcal{X}, \Phi, \Gamma, \mathcal{D})$ be a lower semi-continuous temporal metric space over a complete base space $(X, d)$, and suppose there exists a reference time $t_0 \in T$ such that the fiber $X(t_0)$ is closed in $X$. Then the associated temporal hyperspace $(\mathcal{K}(\Gamma), \mathcal{H})$ consisting of all non-empty sequentially compact subsets of $\Gamma$ is complete under the induced temporal Hausdorff metric.
\end{theorem}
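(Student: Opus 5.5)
The plan follows the classical proof that the Hausdorff hyperspace of a complete metric space is complete. The new ingredient is that sequential compactness of the limit must be obtained intervalwise and transported through the cross-section at $t_0$.

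Let $\{\mathcal{A}_n\}$ be Cauchy in $\mathcal{K}(\Gamma)$, meaning Cauchy in $\mathcal{H}_J$ for every compact $J$. I take as candidate limit the set $\mathcal{A}^*$ of all $\gamma\in\Gamma$ for which there exist $n_k\uparrow\infty$ and $\alpha_k\in\mathcal{A}_{n_k}$ with $\alpha_k\to\gamma$ in $\Gamma$.

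\textbf{Step 1: non-emptiness.} For a fixed exhausting sequence of intervals $J_1\subset J_2\subset\cdots$ with $J_i\ni t_0$, I pass to a subsequence with $\mathcal{H}_{J_i}(\mathcal{A}_{n_k},\mathcal{A}_{n_{k+1}})<2^{-k}$ for all $i\le k$; this is a diagonal choice. Starting from any $\alpha_1\in\mathcal{A}_{n_1}$, the infima are attained, so I pick $\alpha_{k+1}\in\mathcal{A}_{n_{k+1}}$ with $\mathcal{D}_{J_{k}}(\alpha_k,\alpha_{k+1})<2^{-k}$.

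There is a snag here: the metrics $\mathcal{D}_J$ are not assumed monotone in $J$, so smallness on $J_k$ need not control $J_i$ for $i<k$. I would remedy this by choosing each successor to be close simultaneously in $\mathcal{D}_{J_i}$ for all $i\le k$. The intended mechanism is to work at $t_0$ only: Cauchy in $\mathcal{H}_{[t_0,t_0]}$ gives a Cauchy sequence $\{\alpha_k(t_0)\}$ in $X$. Completeness of $X$ and closedness of $X(t_0)$ then give a limit $x^*$ and $\gamma=\Phi_{\cdot,t_0}(x^*)$. Exactly as in the proof of Theorem~\ref{thm:inheritance}, pointwise convergence plus lower semi-continuity upgrades this to convergence in every $\mathcal{D}_J$, provided $\{\alpha_k\}$ is Cauchy on each $J$. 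So the real requirement is a single chain $\{\alpha_k\}$ that is Cauchy on every compact $J$, and the diagonal construction with tails $\sum 2^{-k}$ on each fixed $J_i$ (valid once $k\ge i$) delivers this. Theorem~\ref{thm:inheritance} then yields the limit in $\Gamma$, so $\mathcal{A}^*\neq\emptyset$.

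\textbf{Step 2: convergence.} Fix $J$ and $\epsilon>0$, and take $N$ with $\mathcal{H}_J(\mathcal{A}_n,\mathcal{A}_m)<\epsilon$ for $n,m\ge N$.

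For $\vec{\mathcal{H}}_J(\mathcal{A}_n,\mathcal{A}^*)$: given $\alpha\in\mathcal{A}_n$, I build a chain from $\alpha$ as in Step 1, Cauchy on all intervals and with total $\mathcal{D}_J$-length at most $2\epsilon$. Its limit $\gamma$ lies in $\mathcal{A}^*$, and lower semi-continuity gives $\mathcal{D}_J(\alpha,\gamma)\le 2\epsilon$.

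For $\vec{\mathcal{H}}_J(\mathcal{A}^*,\mathcal{A}_n)$: given $\gamma\in\mathcal{A}^*$ with $\alpha_k\to\gamma$ and $\alpha_k\in\mathcal{A}_{n_k}$, I pick $\beta_k\in\mathcal{A}_n$ with $\mathcal{D}_J(\alpha_k,\beta_k)<\epsilon$. Then $\inf_{\beta\in\mathcal{A}_n}\mathcal{D}_J(\gamma,\beta)\le \mathcal{D}_J(\gamma,\alpha_k)+\epsilon\to\epsilon$.

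\textbf{Step 3: sequential compactness of $\mathcal{A}^*$ (main obstacle).} I expect this step to be the hardest, since without strong continuity Theorem~\ref{thm:characterization_compactness} is unavailable and the usual total-boundedness argument must be replaced. First try: given $\{\gamma_j\}\subset\mathcal{A}^*$, use Step 2 to approximate $\gamma_j$ within $1/i$ on $J_i$ by elements of $\mathcal{A}_{n_i}$, exploit sequential compactness of each $\mathcal{A}_{n_i}$, and extract a diagonal subsequence of $\{\gamma_j\}$ that is Cauchy on every $J$. Theorem~\ref{thm:inheritance} then gives a limit $\gamma\in\Gamma$. Membership $\gamma\in\mathcal{A}^*$ follows from a diagonal argument, since $\mathcal{A}^*$ is closed under limits of its elements by its definition as a set of limit points. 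The delicate point is that $\mathcal{D}_J$ need not be continuous on $\Gamma\times\Gamma$, so compactness of each $\mathcal{A}_{n_i}$ has to be used via convergent subsequences rather than finite nets. I would also invoke the paper's remark that infima are attained.
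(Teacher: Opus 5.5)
There is a genuine gap. It sits in the one mechanism that your Step 1, the forward half of Step 2, and Step 3 all rely on: producing a chain (or diagonal subsequence) that is Cauchy on every compact interval by controlling an exhausting family $J_1\subset J_2\subset\cdots$.

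The bounds $\mathcal{H}_{J_i}(\mathcal{A}_{n_k},\mathcal{A}_{n_{k+1}})<2^{-k}$, $i\le k$, hold separately for each $i$. For each $i$ they give some $\beta^{(i)}\in\mathcal{A}_{n_{k+1}}$ with $\mathcal{D}_{J_i}(\alpha_k,\beta^{(i)})<2^{-k}$. These may be different trajectories, however, and nothing yields a single successor close in all $\mathcal{D}_{J_i}$ (and in $\mathcal{D}_{[t_0,t_0]}$) at once. The Hausdorff distance for $\max_{i\le k}\mathcal{D}_{J_i}$ is not controlled by the individual $\mathcal{H}_{J_i}$. Your ``remedy'' names this requirement but does not supply it.

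Even granting such a chain, it is Cauchy only on the countably many $J_i$. Since $\mathcal{D}_J$ is not monotone in $J$, and continuity of $(a,b)\mapsto\mathcal{D}_{[a,b]}(\gamma_1,\gamma_2)$ holds only pair by pair, this says nothing about an arbitrary compact $J$. So Theorem~\ref{thm:inheritance} cannot be invoked. The same objection applies to the diagonal extraction in Step 3. It also applies to your claim that $\mathcal{A}^*$ is closed under $\Gamma$-limits, which is itself a diagonal argument over uncountably many metrics. Your reverse bound in Step 2 is fine, and is simpler than the paper's, because your definition of $\mathcal{A}^*$ builds in $\Gamma$-convergence.

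The missing idea is that no simultaneous control is needed: choose everything at $t_0$ and upgrade interval by interval.

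\begin{itemize}
\item Since $\Phi_{t,t_0}$ is continuous, convergence of $\alpha_k(t_0)$ gives pointwise convergence on all of $T$.
\item Fix $J$. Each $\mathcal{A}_n$ is sequentially compact for $\mathcal{D}_J$, hence totally bounded. (Your worry about continuity of $\mathcal{D}_J$ is unfounded: it is a metric, and $\Gamma$-convergence implies $\mathcal{D}_J$-convergence.)
\item The Cauchy condition in $\mathcal{H}_J$ transfers total boundedness to $\bigcup_n\mathcal{A}_n$.
\item Hence any subsequence of a pointwise-convergent sequence $\alpha_k\in\mathcal{A}_{n_k}$ has a $\mathcal{D}_J$-Cauchy sub-subsequence. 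By lower semi-continuity, that sub-subsequence converges in $\mathcal{D}_J$ to the pointwise limit.
\item So the whole once-chosen sequence converges in every $\mathcal{D}_J$, with no diagonalization.
\end{itemize}

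The total-boundedness argument you proposed to replace is therefore exactly the right tool. With it, your $\mathcal{A}^*$ is non-empty and equals the bundle $\{\gamma\in\Gamma:\gamma(t_0)\in A_0\}$, where $A_0=\lim\mathcal{A}_n(t_0)$ in $(X,d)$. That bundle is the paper's candidate. The paper gets the forward bound from lower semi-continuity at pointwise limits, the reverse bound from compactness of $\mathcal{A}_n$ plus lower semi-continuity, and sequential compactness of $\mathcal{A}^*$ from this same total-boundedness and lower semi-continuity argument.
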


\begin{proof}
By Theorem \ref{thm:inheritance}, the hypotheses guarantee that the underlying temporal metric space is complete.
Let $\{\mathcal{A}_n\}_{n=1}^\infty$ be a Cauchy sequence in $(\mathcal{K}(\Gamma), \mathcal{H})$. To prove completeness, we construct a limit set $\mathcal{A}^* \in \mathcal{K}(\Gamma)$ and demonstrate that $\lim_{n \to \infty} \mathcal{H}_I(\mathcal{A}_n, \mathcal{A}^*) = 0$ for every compact interval $I \subseteq T$. Without loss of generality, let $t_0 = 0$.

Step 1: The Pointwise Limit. 

On the degenerate interval $[0, 0]$, the temporal metric evaluates to the base metric: $\mathcal{D}_{[0,0]}(\alpha, \beta) = d(\alpha(0), \beta(0))$. Because $\{\mathcal{A}_n\}$ is Cauchy in the hyperspace, the sequence of cross-sections $\{\mathcal{A}_n(0)\}$ forms a Cauchy sequence of compact sets in the complete metric space $(X, d)$. By classical Hausdorff completeness, $\{\mathcal{A}_n(0)\}$ converges to a unique compact limit set $A_0 \subseteq X(0)$.

We define the global candidate limit set $\mathcal{A}^*$ as the bundle of all trajectories originating from $A_0$:
\[
    \mathcal{A}^* = \{ \alpha \in \Gamma \mid \alpha(0) \in A_0 \}
\]
Because $A_0$ is non-empty, $\mathcal{A}^*$ is non-empty. 

Step 2: Forward Directed Distance Bound. ($\vec{\mathcal{H}}_I(\mathcal{A}_n, \mathcal{A}^*) \to 0$) 

Let $I \subseteq T$ be an arbitrary compact interval. Fix $\epsilon > 0$. Since $\{\mathcal{A}_n\}$ is Cauchy on $I$, there exists $N \in \mathbb{N}$ such that for all $n, m \ge N$, $\mathcal{H}_I(\mathcal{A}_n, \mathcal{A}_m) < \epsilon$.

Fix $n \ge N$ and select an arbitrary trajectory $\alpha_n \in \mathcal{A}_n$. For each $m \ge N$, the bound $\mathcal{H}_I(\mathcal{A}_n, \mathcal{A}_m) < \epsilon$ guarantees the existence of a trajectory $\gamma_m \in \mathcal{A}_m$ satisfying $\mathcal{D}_I(\alpha_n, \gamma_m) < \epsilon$. Let $x_m = \gamma_m(0) \in \mathcal{A}_m(0)$. Because $\mathcal{A}_m(0) \to A_0$ in the Hausdorff metric of the base space, the sequence $\{x_m\}_{m \ge N}$ contains a convergent subsequence $x_{m_k} \to x^* \in A_0$. Let $\alpha^* \in \mathcal{A}^*$ be the unique trajectory originating from $x^*$. Consequently, $\gamma_{m_k}$ converges pointwise to $\alpha^*$ for all $t \in T$.

Applying lower semi-continuity (LSC) to the reference trajectory $\alpha_n$ and the pointwise-converging sequence $\gamma_{m_k}$:
\[
    \mathcal{D}_I(\alpha_n, \alpha^*) \le \liminf_{k \to \infty} \mathcal{D}_I(\alpha_n, \gamma_{m_k}) \le \epsilon
\]
Taking the supremum over all $\alpha_n \in \mathcal{A}_n$ yields $\vec{\mathcal{H}}_I(\mathcal{A}_n, \mathcal{A}^*) \le \epsilon$.

Step 3: Reverse Directed Distance Bound. ($\vec{\mathcal{H}}_I(\mathcal{A}^*, \mathcal{A}_n) \to 0$) 

Fix $\alpha^* \in \mathcal{A}^*$. By definition, $\alpha^*(0) \in A_0$. Because $\mathcal{A}_m(0) \to A_0$, there exists a sequence of points $y_m \in \mathcal{A}_m(0)$ converging to $\alpha^*(0)$. The corresponding trajectories $\eta_m \in \mathcal{A}_m$ satisfying $\eta_m(0) = y_m$ converge pointwise to $\alpha^*$.

For our fixed $n \ge N$ and any $m \ge N$, the bound $\mathcal{H}_I(\mathcal{A}_n, \mathcal{A}_m) < \epsilon$ guarantees the existence of a trajectory $\beta_{n,m} \in \mathcal{A}_n$ such that $\mathcal{D}_I(\beta_{n,m}, \eta_m) < \epsilon$. Because $\mathcal{A}_n$ is sequentially compact in $\Gamma$, the sequence $\{\beta_{n,m}\}_{m=N}^\infty$ possesses a convergent subsequence $\beta_{n, m_k} \to \hat{\alpha}_n \in \mathcal{A}_n$ under the interval metric $\mathcal{D}_I$.

Applying LSC to the reference trajectory $\hat{\alpha}_n$ and the pointwise-converging sequence $\eta_{m_k}$:
\[
    \mathcal{D}_I(\hat{\alpha}_n, \alpha^*) \le \liminf_{k \to \infty} \mathcal{D}_I(\hat{\alpha}_n, \eta_{m_k})
\]
By the triangle inequality:
\begin{align*}
    \liminf_{k \to \infty} \mathcal{D}_I(\hat{\alpha}_n, \eta_{m_k}) &\le \liminf_{k \to \infty} \left( \mathcal{D}_I(\hat{\alpha}_n, \beta_{n, m_k}) + \mathcal{D}_I(\beta_{n, m_k}, \eta_{m_k}) \right) \\
    &\le 0 + \epsilon = \epsilon
\end{align*}
Thus, $\mathcal{D}_I(\hat{\alpha}_n, \alpha^*) \le \epsilon$. Taking the supremum over all $\alpha^* \in \mathcal{A}^*$ yields $\vec{\mathcal{H}}_I(\mathcal{A}^*, \mathcal{A}_n) \le \epsilon$. Combining Steps 2 and 3 establishes $\mathcal{H}_I(\mathcal{A}_n, \mathcal{A}^*) \to 0$ for every compact interval $I \subseteq T$.

Step 4: Sequential Compactness of the Limit Set. 

To complete the proof, we must verify that $\mathcal{A}^* \in \mathcal{K}(\Gamma)$, which requires $\mathcal{A}^*$ to be sequentially compact in $\Gamma$. 
Let $\{\alpha_k\}_{k=1}^\infty \subseteq \mathcal{A}^*$ be an arbitrary sequence. At $t=0$, the points $\alpha_k(0)$ reside in the compact set $A_0$. Thus, there exists a single subsequence $\alpha_{k_j}(0)$ converging to some $x^* \in A_0$. Let $\alpha^* \in \mathcal{A}^*$ be the trajectory satisfying $\alpha^*(0) = x^*$. Because the evolution map $\Phi_{t, 0}$ is continuous, $\alpha_{k_j}(t) \to \alpha^*(t)$ for all $t \in T$.

We claim this specific subsequence $\alpha_{k_j}$ converges to $\alpha^*$ under every compact interval metric $\mathcal{D}_I$. Let $I \subseteq T$ be an arbitrary compact interval. Assume for contradiction that $\alpha_{k_j}$ does not converge to $\alpha^*$ under $\mathcal{D}_I$. Then there exists $\epsilon_0 > 0$ and a further subsequence $\{\eta_i\} \subseteq \{\alpha_{k_j}\}$ such that $\mathcal{D}_I(\eta_i, \alpha^*) \ge \epsilon_0$ for all $i$.

Because $\mathcal{H}_I(\mathcal{A}_n, \mathcal{A}^*) \to 0$ and each $\mathcal{A}_n$ is sequentially compact (hence totally bounded), the limit set $\mathcal{A}^*$ inherits total boundedness under $\mathcal{D}_I$. Because $\{\eta_i\}$ resides in a totally bounded space under $\mathcal{D}_I$, it must possess a Cauchy subsequence $\{\mu_m\}$ under $\mathcal{D}_I$. Because $\{\mu_m\}$ is a subsequence of $\{\alpha_{k_j}\}$, it preserves the pointwise convergence $\mu_m \to \alpha^*$. 

Applying LSC to the Cauchy sequence $\{\mu_m\}$: for any $\delta > 0$, there exists $M$ such that for all $p, q \ge M$, $\mathcal{D}_I(\mu_p, \mu_q) < \delta$. Fixing $p \ge M$:
\[
    \mathcal{D}_I(\mu_p, \alpha^*) \le \liminf_{q \to \infty} \mathcal{D}_I(\mu_p, \mu_q) \le \delta
\]
As $\delta$ was arbitrary, $\lim_{p \to \infty} \mathcal{D}_I(\mu_p, \alpha^*) = 0$. Thus, $\{\mu_m\}$ strictly converges to $\alpha^*$ under $\mathcal{D}_I$. However, $\{\mu_m\}$ is a subsequence of $\{\eta_i\}$, which was strictly bounded away from $\alpha^*$ by $\epsilon_0$, forming a contradiction.

Therefore, the original subsequence $\alpha_{k_j}$ converges to $\alpha^*$ under $\mathcal{D}_I$. Because $I$ was arbitrary and the choice of the subsequence $\alpha_{k_j}$ was completely independent of $I$, $\alpha_{k_j}$ converges to $\alpha^*$ on all compact intervals simultaneously. Thus, $\alpha_{k_j} \to \alpha^*$ in $\Gamma$, proving $\mathcal{A}^*$ is sequentially compact. Consequently, $\mathcal{A}^* \in \mathcal{K}(\Gamma)$, and the temporal hyperspace is complete.
\end{proof}

\section{Temporal Iterated Function Systems}

In a temporal metric space $(\mathcal{X}, \Phi, \Gamma, \mathcal{D})$, the evolution family $\Phi_{t, s}$ directly connects the fibers across time. Therefore, an iterated function system defined on a single reference fiber $X(t_0)$ induces a corresponding IFS on every fiber $X(t)$ via topological conjugation. We now formalize these flow-induced dynamics and prove that the evolution family strictly propagates the attractor across time, ensuring its topological stability even when ambient contractivity fails.

\begin{definition}
Let $(\mathcal{X}, \Phi, \Gamma, \mathcal{D})$ be a temporal metric space. For a fixed time $t_0 \in T$, given a function $h_{t_0}: X(t_0) \to X(t_0)$, its \textit{flow-induced function} $h_t: X(t) \to X(t)$ at any time $t \in T$ is defined by the conjugation:
$$
    h_t = \Phi_{t, t_0} \circ h_{t_0} \circ \Phi_{t_0, t}.
$$
The collection $\{h_t\}_{t \in T}$ is the \textit{induced dynamically equivariant family} of $h_{t_0}$.
\end{definition}

\begin{definition}
Let $(\mathcal{X}, \Phi, \Gamma, \mathcal{D})$ be a temporal metric space. For a fixed time $t_0 \in T$, let $\mathcal{I}_{t_0} = \{h_{1, t_0}, \dots, h_{N, t_0}\}$ be an iterated function system on the fiber $X(t_0)$. The \textit{flow-induced iterated function system} at time $t \in T$ on $X(t)$ is the collection $\mathcal{I}_t = \{h_{1, t}, \dots, h_{N, t}\}$, where $h_{i, t} = \Phi_{t, t_0} \circ h_{i, t_0} \circ \Phi_{t_0, t}$. The family $\mathcal{I} = \{\mathcal{I}_t\}_{t \in T}$ is called the \textit{temporal iterated function system} (TIFS) generated by the system $\mathcal{I}_{t_0}$.
\end{definition}

\begin{definition}
Let $(\mathcal{X}, \Phi, \Gamma, \mathcal{D})$ be a temporal metric space, and let $\mathcal{I}$ be a temporal iterated function system generated by a reference IFS $\mathcal{I}_{t_0}$ on the fiber $X(t_0)$. If $A_{t_0} \subseteq X(t_0)$ is an attractor of $\mathcal{I}_{t_0}$, the \textit{temporal attractor} of $\mathcal{I}$ associated with $A_{t_0}$ is defined as the trajectory bundle $\mathcal{A}^*$ passing through $A_{t_0}$ at time $t_0$:
$$
    \mathcal{A}^* = \{\alpha \in \Gamma \mid \alpha(t_0) \in A_{t_0}\}.
$$
\end{definition}

\begin{theorem}\label{thm:topological_stability}
Let $(\mathcal{X}, \Phi, \Gamma, \mathcal{D})$ be a temporal metric space on the base space $(X, d)$. For a fixed time $t_0 \in T$, let $\mathcal{I}_{t_0}$ be an IFS on the fiber $X(t_0)$ with a unique, compact attractor $A_{t_0}$, and let $\mathcal{A}^*$ be its associated temporal attractor. Then, for any $t \in T$, the cross-section $\mathcal{A}^*(t)$ is the unique, compact attractor of the flow-induced IFS $\mathcal{I}_t$ on the fiber $X(t)$, and it satisfies:
$$
    \mathcal{A}^*(t) = \Phi_{t, t_0}(A_{t_0}).
$$
Furthermore, this attractor exists even if the flow-induced functions $h_{i, t}$ fail to be contractions in the metric $d$.
\end{theorem}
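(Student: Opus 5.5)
The plan is to separate the statement into three parts: the identity $\mathcal{A}^*(t) = \Phi_{t,t_0}(A_{t_0})$, the attractor property, and the remark about contractivity. Each part reduces to transporting structure through the homeomorphism $\Phi_{t,t_0}: X(t_0) \to X(t)$.

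\textbf{Step 1: the identity.} If $\alpha \in \mathcal{A}^*$, then $\alpha(t) = \Phi_{t,t_0}(\alpha(t_0))$ with $\alpha(t_0) \in A_{t_0}$, which gives $\mathcal{A}^*(t) \subseteq \Phi_{t,t_0}(A_{t_0})$. Conversely, for $x \in A_{t_0}$ set $\alpha(s) = \Phi_{s,t_0}(x)$. The argument already used in Theorem~\ref{thm:inheritance} (the cocycle property plus continuity in $t$) shows $\alpha \in \Gamma$, and clearly $\alpha(t_0) = x$, so $\Phi_{t,t_0}(x) = \alpha(t) \in \mathcal{A}^*(t)$. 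Compactness of $\Phi_{t,t_0}(A_{t_0})$ follows because it is the continuous image of a compact set.

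\textbf{Step 2: invariance.} Write $\mathcal{W}_t(B) = \bigcup_i h_{i,t}(B)$ for the Hutchinson operator at time $t$. Using $\Phi_{t_0,t} \circ \Phi_{t,t_0} = \mathrm{Id}$, we get $h_{i,t}\circ \Phi_{t,t_0} = \Phi_{t,t_0}\circ h_{i,t_0}$. Hence $\mathcal{W}_t \circ \Phi_{t,t_0} = \Phi_{t,t_0} \circ \mathcal{W}_{t_0}$ on subsets of $X(t_0)$. Applying this to $A_{t_0} = \mathcal{W}_{t_0}(A_{t_0})$ shows that $\Phi_{t,t_0}(A_{t_0})$ is $\mathcal{W}_t$-invariant.

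\textbf{Step 3: attraction and uniqueness.} I will read ``attractor'' in the topological sense: the invariant compact set such that $\mathcal{W}_{t_0}^k(B) \to A_{t_0}$ for every $B \in \mathcal{K}(X(t_0))$ (or for $B$ in a basin), in the Hausdorff topology. The conjugacy gives $\mathcal{W}_t^k = \Phi_{t,t_0}\circ\mathcal{W}_{t_0}^k\circ\Phi_{t_0,t}$. The key fact needed is that $\Phi_{t,t_0}$ induces a homeomorphism of $\mathcal{K}(X(t_0))$ onto $\mathcal{K}(X(t))$ for the Hausdorff metric topologies. This holds because a continuous map restricted to a compact neighbourhood structure is uniformly continuous near compact sets. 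For Hausdorff convergence $B_k \to A$ with $A$ compact, the set $A \cup \bigcup_k B_k$ is compact, and $\Phi$ is uniformly continuous on it, so $\Phi(B_k) \to \Phi(A)$. Given $C \in \mathcal{K}(X(t))$, apply attraction at $t_0$ to $\Phi_{t_0,t}(C)$ and push forward; this gives $\mathcal{W}_t^k(C) \to \Phi_{t,t_0}(A_{t_0})$. For uniqueness, any other attractor $A'$ of $\mathcal{I}_t$ pulls back under $\Phi_{t_0,t}$ to an attractor of $\mathcal{I}_{t_0}$. Uniqueness at $t_0$ then forces $\Phi_{t_0,t}(A') = A_{t_0}$.

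\textbf{Step 4: failure of contractivity.} None of Steps 1--3 used a Lipschitz bound on the $h_{i,t}$; everything is transported topologically. To make the final sentence concrete I will give an example. Take Example 2.3 ($\Phi_{t,s}(x) = (t/s)x$), which is actually conjugation by a similarity and so preserves contraction ratios. A non-similarity flow is better, for instance $\Phi_{t,s}(x) = \operatorname{sign}(x)|x|^{e^{t-s}}$ on $[-1,1]$, or a shear on $\mathbb{R}^2$. Start with $h_{t_0}(x) = x/2$. The conjugate $h_t(x) = \operatorname{sign}(x)\,2^{-e^{t-t_0}}|x|$ remains a contraction, so instead I will use a nonlinear conjugacy such as $x \mapsto x^3$. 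This gives $h_t(y) = y/8$, still a contraction, so the choice must be made carefully. The cleaner candidate is $\Phi(x) = x^{1/3}$, which sends $h(x) = x/2$ to $y/2^{1/3}$, again contractive.

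A genuine failure needs a non-homogeneous conjugacy. For example, take $h_{t_0}(x) = x/2 + 1/2$ on $[0,1]$, whose fixed point is $1$, and conjugate by $\phi(x) = x^p$. Then $h_t(y) = ((y^{1/p} + 1)/2)^p$, and its derivative blows up near $y = 0$ when $p > 1$. So $h_t$ is not a $d$-contraction, while its attractor is $\{1\}$. Constructing this explicit example, with an evolution family realizing such a conjugacy at time $t$, is the main obstacle. The rest of the proof is routine transport, with the Hausdorff-continuity of the induced map on $\mathcal{K}$ as the only technical point.
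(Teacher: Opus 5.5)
Your proposal is correct and follows the paper's route: conjugate the Hutchinson operator as $\mathcal{W}_t = \Phi_{t,t_0}\circ\mathcal{W}_{t_0}\circ\Phi_{t_0,t}$, get invariance from $\mathcal{W}_{t_0}(A_{t_0}) = A_{t_0}$, and get attraction by pulling back an arbitrary compact $C' \subseteq X(t)$ and pushing the Hausdorff limit forward through the continuous induced map on compact sets; your Step 1 is in fact more careful than the paper, which only spells out $\mathcal{A}^*(t) \subseteq \Phi_{t,t_0}(A_{t_0})$, whereas you also prove the reverse inclusion by building the trajectory $s \mapsto \Phi_{s,t_0}(x)$. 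For the final sentence the paper relies only on your observation that no Lipschitz bound enters the argument, so the example in Step 4 is optional, and it is not really an obstacle: the power flow $\Phi_{t,s}(x) = x^{e^{t-s}}$ on $X(t) = [0,1]$ already realizes your conjugacy with $p = e^{t-t_0} > 1$ for every $t > t_0$.
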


\begin{proof}
By definition, the cross-section of the temporal attractor at time $t$ is $\mathcal{A}^*(t) = \{\alpha(t) \mid \alpha \in \mathcal{A}^*\}$. Because every trajectory $\alpha \in \mathcal{A}^*$ satisfies $\alpha(t) = \Phi_{t, t_0}(\alpha(t_0))$ and $\alpha(t_0) \in A_{t_0}$, it follows immediately that $\mathcal{A}^*(t) = \Phi_{t, t_0}(A_{t_0})$.

Because $A_{t_0}$ is a compact subset of $X(t_0)$ and the flow $\Phi_{t, t_0}$ is continuous, its image $\mathcal{A}^*(t)$ is a compact subset of $X(t)$.

We first establish that $\mathcal{A}^*(t)$ is invariant under the Hutchinson operator $\mathcal{W}_t$ associated with the flow-induced IFS $\mathcal{I}_t$. By definition, this operator evaluates as:
\begin{align*}
    \mathcal{W}_t(S) &= \bigcup_{i=1}^N h_{i, t}(S) = \bigcup_{i=1}^N \big(\Phi_{t, t_0} \circ h_{i, t_0} \circ \Phi_{t_0, t}\big)(S) \\
    &= \Phi_{t, t_0}\left( \bigcup_{i=1}^N h_{i, t_0}\big(\Phi_{t_0, t}(S)\big) \right) = \big(\Phi_{t, t_0} \circ \mathcal{W}_{t_0} \circ \Phi_{t_0, t}\big)(S).
\end{align*}
Evaluating this conjugated operator on the target set $\mathcal{A}^*(t)$ yields:
$$
    \mathcal{W}_t(\mathcal{A}^*(t)) = \big(\Phi_{t, t_0} \circ \mathcal{W}_{t_0} \circ \Phi_{t_0, t}\big) \big(\Phi_{t, t_0}(A_{t_0})\big).
$$
Since the flow operators are mutual inverses ($\Phi_{t_0, t} \circ \Phi_{t, t_0} = \mathrm{Id}$), this simplifies to:
$$
    \mathcal{W}_t(\mathcal{A}^*(t)) = \Phi_{t, t_0}\big(\mathcal{W}_{t_0}(A_{t_0})\big).
$$
Because $A_{t_0}$ is the unique attractor of the reference IFS, it is invariant under $\mathcal{W}_{t_0}$, meaning $\mathcal{W}_{t_0}(A_{t_0}) = A_{t_0}$. Substituting this identity yields:
$$
    \mathcal{W}_t(\mathcal{A}^*(t)) = \Phi_{t, t_0}(A_{t_0}) = \mathcal{A}^*(t).
$$
Thus, $\mathcal{A}^*(t)$ is invariant.

Finally, we determine dynamical stability. Let $C' \subset X(t)$ be an arbitrary compact set, and define $C = \Phi_{t_0, t}(C')$. Because $\Phi_{t_0, t}$ is continuous, $C$ is a compact subset of $X(t_0)$. Because $A_{t_0}$ is the unique attractor of $\mathcal{I}_{t_0}$, the sequence of iterates $\mathcal{W}_{t_0}^m(C)$ converges in the Hausdorff metric to $A_{t_0}$ as $m \to \infty$.

Evaluating the $m$-th iteration of $\mathcal{W}_t$ on $C'$ gives:
$$
    \mathcal{W}_t^m(C') = (\Phi_{t, t_0} \circ \mathcal{W}_{t_0} \circ \Phi_{t_0, t})^m(C') = \Phi_{t, t_0} \big( \mathcal{W}_{t_0}^m(C) \big).
$$
A continuous function between metric spaces induces a continuous function between their respective hyperspaces of compact sets equipped with the Hausdorff metric. Because the flow operator $\Phi_{t, t_0}$ is continuous, we may pass the limit directly through it:
$$
    \lim_{m \to \infty} \mathcal{W}_t^m(C') = \Phi_{t, t_0}\left( \lim_{m \to \infty} \mathcal{W}_{t_0}^m(C) \right) = \Phi_{t, t_0}(A_{t_0}) = \mathcal{A}^*(t).
$$
Because $\mathcal{A}^*(t)$ is compact, invariant, and the sequence of iterates $\mathcal{W}_t^m(C')$ converges to $\mathcal{A}^*(t)$ in the Hausdorff metric for every compact set $C' \subset X(t)$, it is the unique attractor of the flow-induced IFS $\mathcal{I}_t$.
\end{proof}

\begin{remark}
If the temporal metric space is lower semi-continuous and the base space is complete, the proof of Theorem \ref{thm:hyperspace_completeness} guarantees that the temporal attractor $\mathcal{A}^*$ is sequentially compact in $\Gamma$. Consequently, $\mathcal{A}^* \in \mathcal{K}(\Gamma)$. Thus, the temporal attractor is properly characterized as a single compact element of the temporal hyperspace, with the flow-induced attractors $\mathcal{A}^*(t)$ arising strictly as its cross-sections.
\end{remark}

\begin{theorem}\label{thm:isotopic_conjugacy}
Let $\mathcal{I}_0 = \{f_1, \dots, f_N\}$ and $\mathcal{I}^* = \{g_1, \dots, g_N\}$ be iterated function systems on a complete metric space $(X, d)$. Assume that each $g_i$ is a strict contraction on $(X, d)$, and that $\mathcal{I}_0$ is topologically conjugate to $\mathcal{I}^*$ via a homeomorphism $H: X \to X$ satisfying $f_i = H^{-1} \circ g_i \circ H$ for all $i \in \{1, \dots, N\}$.

If $H$ is isotopic to the identity map on $X$, then there exists a temporal metric space $(\mathcal{X}, \Phi, \Gamma, \mathcal{D})$ over $(X, d)$ with time domain $T = [0, 1]$ such that the flow-induced IFS at time $t=1$, denoted $\mathcal{I}_1$, consists entirely of strict contractions on $(X, d)$.
\end{theorem}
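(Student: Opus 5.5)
The plan is to build the temporal metric space directly from the isotopy. Since $H$ is isotopic to the identity, there is a continuous map $F: [0,1] \times X \to X$ with each $H_t := F(t, \cdot)$ a homeomorphism of $X$, $H_0 = \mathrm{Id}_X$, and $H_1 = H$. (If the isotopy is only a homotopy through homeomorphisms, I will use that. The inverses may then fail to be jointly continuous in $(t,x)$; I return to this below.) I will set $T = [0,1]$ and take every fiber to be the whole space, $X(t) = X$. The evolution family is
\[
\Phi_{t,s} = H_t \circ H_s^{-1}.
\]
Then $\Phi_{t,t} = \mathrm{Id}$, and the cocycle identity $\Phi_{t,s}\circ\Phi_{s,r} = H_t H_s^{-1} H_s H_r^{-1} = \Phi_{t,r}$ holds. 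For fixed $s$, the map $(t,x) \mapsto H_t(H_s^{-1}(x))$ is continuous because $F$ is jointly continuous and $H_s^{-1}$ is a fixed homeomorphism. So condition (c) needs continuity only in $(t,x)$ with $s$ frozen, and the inverse-continuity issue does not arise there.

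Next I will take $\Gamma$ to be the set of curves $\gamma_p(t) = H_t(p)$ for $p \in X$. These are continuous and satisfy $\gamma_p(t) = \Phi_{t,s}(\gamma_p(s))$. Conversely, any element of $\Gamma$ has this form with $p = \gamma(0)$, using $H_0 = \mathrm{Id}$. For the temporal metrics I will use the sup metric
\[
\mathcal{D}_{[a,b]}(\alpha,\beta) = \max_{\tau\in[a,b]} d(\alpha(\tau),\beta(\tau)).
\]
The maximum exists because the integrand is continuous on a compact interval. This is a metric by Lemma \ref{lem:global_identity}. It reduces to $d$ on degenerate intervals. It is continuous in $(a,b)$ because the running maximum of a continuous function over $[a,b]$ depends continuously on the endpoints, which is a routine uniform continuity argument on compact $[0,1]$. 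This verifies all axioms of the definition.

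Finally, I will compute the flow-induced IFS at $t=1$ with reference time $t_0 = 0$:
\[
h_{i,1} = \Phi_{1,0}\circ f_i\circ \Phi_{0,1} = H \circ f_i \circ H^{-1} = H\circ H^{-1} g_i H\circ H^{-1} = g_i .
\]
Each $g_i$ is a strict contraction on $(X,d)$ by hypothesis, so $\mathcal{I}_1 = \mathcal{I}^*$ consists of strict contractions. In this construction the contractivity at $t=1$ is tautological. The real content is that the conjugating homeomorphism is realized as the time-one map of an evolution family, which is exactly what the isotopy provides.

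The main obstacle is a definitional subtlety rather than the algebra. One must check that the isotopy yields an evolution family satisfying condition (c) as stated, in particular that each $\Phi_{t,s}$ is a homeomorphism $X \to X$; it is, being a composition of homeomorphisms. One must also check that the notion of "isotopic" used supplies a jointly continuous $F$ with each slice a homeomorphism. I will make this explicit by fixing that meaning of isotopy at the start. Completeness of $(X,d)$ is not needed for the construction itself; it only matters for subsequent uses, namely Hutchinson's theorem for $\mathcal{I}^*$.
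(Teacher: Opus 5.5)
Your proposal is correct and follows the paper's proof essentially step for step. You use the same constant fibers $X(t)=X$, the same evolution family $\Phi_{t,s}=H_t\circ H_s^{-1}$ built from the isotopy (condition (c) only needs continuity for fixed $s$), and the same computation $\Phi_{1,0}\circ f_i\circ\Phi_{0,1}=H\circ f_i\circ H^{-1}=g_i$. Your one addition is to exhibit the sup metric $\mathcal{D}_{[a,b]}(\alpha,\beta)=\max_{\tau\in[a,b]}d(\alpha(\tau),\beta(\tau))$ and check its axioms, which usefully fills in the paper's unproved assertion that ``any valid interval metric'' completes the temporal metric space.
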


\begin{proof}
Define the fibers as $X(t) = X$ for all $t \in T$. By hypothesis, $H$ is isotopic to the identity. Thus, there exists a continuous map $F: T \times X \to X$, denoted $F(t, x) = H_t(x)$, such that $H_0 = \mathrm{Id}_X$, $H_1 = H$, and for each $t \in T$, the map $H_t: X \to X$ is a homeomorphism.

Define the evolution family $\Phi$ for all $s, t \in T$ by $\Phi_{t, s} = H_t \circ H_s^{-1}$. This definition satisfies the identity and composition axioms:
\begin{align*}
    \Phi_{t, t} &= H_t \circ H_t^{-1} = \mathrm{Id}_X, \\
    \Phi_{t, s} \circ \Phi_{s, r} &= H_t \circ H_s^{-1} \circ H_s \circ H_r^{-1} = H_t \circ H_r^{-1} = \Phi_{t, r}.
\end{align*}
For any fixed $s \in T$, the inverse map $x \mapsto H_s^{-1}(x)$ is continuous on $X$. Because the isotopy $(t, y) \mapsto H_t(y)$ is continuous on $T \times X$, their composition $(t, x) \mapsto H_t(H_s^{-1}(x)) = \Phi_{t, s}(x)$ is continuous. 

The evolution family $\Phi$ uniquely determines the trajectory space $\Gamma$, which, when equipped with any valid interval metric $\mathcal{D}$, formally completes the temporal metric space. By definition and the topological conjugacy, the flow-induced mappings $h_i \in \mathcal{I}_1$ evaluate to:
\begin{align*}
    h_i &= \Phi_{1, 0} \circ f_i \circ \Phi_{0, 1} = H_1 \circ H_0^{-1} \circ f_i \circ H_0 \circ H_1^{-1} \\
    &= H \circ f_i \circ H^{-1} = g_i.
\end{align*}

Since $h_i = g_i$ and each $g_i$ is a strict contraction, the system $\mathcal{I}_1$ consists entirely of strict contractions on $(X, d)$.
\end{proof}

\begin{example}
We demonstrate that if the conjugacy $H$ is not isotopic to the identity, the construction of a temporal metric space satisfying $\Phi_{1,0} = H$ as described in Theorem \ref{thm:isotopic_conjugacy} may fail.

Let the base space be $\mathbb{R}$ equipped with the standard Euclidean metric $d(x,y) = |x - y|$. Define an IFS $\mathcal{I}^* = \{g_1, g_2\}$ by:
\begin{align*}
    g_1(x) &= \frac{1}{2}x + 2, \\
    g_2(x) &= \frac{1}{2}x - 2.
\end{align*}
Both $g_1$ and $g_2$ are strict contractions with Lipschitz constant $1/2$.

Define the orientation-reversing homeomorphism $H: \mathbb{R} \to \mathbb{R}$ by $H(x) = -x^3$, with inverse $H^{-1}(y) = -\sqrt[3]{y}$. Via the topological conjugacy $f_i = H^{-1} \circ g_i \circ H$, the initial IFS $\mathcal{I}_0 = \{f_1, f_2\}$ evaluates to:
\begin{align*}
    f_1(x) &= -\sqrt[3]{-\frac{1}{2}x^3 + 2}, \\
    f_2(x) &= -\sqrt[3]{-\frac{1}{2}x^3 - 2}.
\end{align*}
Because the derivatives of $f_1$ and $f_2$ are unbounded near $x = \sqrt[3]{4}$ and $x = -\sqrt[3]{4}$ respectively, the mappings fail to be contractions on $(\mathbb{R}, d)$.

Suppose, for contradiction, that a temporal metric space $(\mathcal{X}, \Phi, \Gamma, \mathcal{D})$ over $\mathbb{R}$ with time domain $T = [0, 1]$ exists such that the forward flow equates to the conjugacy, $\Phi_{1,0} = H$. By definition, the map $t \mapsto \Phi_{t, 0}(x)$ must be continuous, and for each $t \in T$, the map $\Phi_{t, 0}: \mathbb{R} \to \mathbb{R}$ must be a homeomorphism.

Define the distance function $D: [0, 1] \to \mathbb{R}$ by:
$$
    D(t) = \Phi_{t, 0}(1) - \Phi_{t, 0}(-1).
$$
The continuity of the flow ensures $D(t)$ is continuous. Evaluating the boundary times yields:
\begin{align*}
    D(0) &= \Phi_{0, 0}(1) - \Phi_{0, 0}(-1) = \mathrm{Id}_{\mathbb{R}}(1) - \mathrm{Id}_{\mathbb{R}}(-1) = 2, \\
    D(1) &= \Phi_{1, 0}(1) - \Phi_{1, 0}(-1) = H(1) - H(-1) = -1 - 1 = -2.
\end{align*}
Since $D(0) > 0$ and $D(1) < 0$, the Intermediate Value Theorem guarantees the existence of a time $t^* \in (0, 1)$ such that $D(t^*) = 0$. 

The condition $D(t^*) = 0$ implies $\Phi_{t^*, 0}(1) = \Phi_{t^*, 0}(-1)$. Thus, $\Phi_{t^*, 0}$ is not injective, contradicting the requirement that it be a homeomorphism. Therefore, such an evolution family $\Phi$ cannot exist.
\end{example}

We conclude with two explicit examples demonstrating Theorem \ref{thm:topological_stability}.

\begin{example}\label{ex:transient_contractivity}
Let the base space be $\mathbb{R}^2$ with the usual Euclidean metric $d$. Define a temporal metric space with time domain $T = [0, \infty)$ and fibers $X(t) = \mathbb{R}^2$ for all $t \in T$. Let the evolution family be:
$$
    \Phi_{t, s}(x, y) = \big(e^{t-s}x, \, y\big).
$$

Let $\mathcal{I}_0 = \{f_0, g_0\}$ be an iterated function system on the fiber $X(0)$, where:
\begin{align*}
    f_0(x, y) &= \left( \frac{y}{4}, \, 2x \right), \\
    g_0(x, y) &= \left( -\frac{x}{2}, \, -\frac{y}{2} + 1 \right).
\end{align*}
The mapping $g_0$ is a contraction with Lipschitz constant $1/2$. However, $f_0$ expands the $x$-coordinates by a factor of $2$. Thus, $\mathcal{I}_0$ fails to be a classical IFS consisting of contractions on $(\mathbb{R}^2, d)$.

The flow-induced iterated function system $\mathcal{I}_t = \{f_t, g_t\}$ on the fiber $X(t)$ evaluates to:
\begin{align*}
    f_t(x, y) &= \big(\Phi_{t, 0} \circ f_0 \circ \Phi_{0, t}\big)(x, y) = \left( \frac{e^t}{4}y, \, 2e^{-t}x \right), \\
    g_t(x, y) &= \big(\Phi_{t, 0} \circ g_0 \circ \Phi_{0, t}\big)(x, y) = \left( -\frac{x}{2}, \, -\frac{y}{2} + 1 \right).
\end{align*}
The Lipschitz constant of $g_t$ remains $1/2$ for all $t \in T$. For $f_t$, the contractivity factor evaluates to:
$$
    L(f_t) = \max\left( \frac{e^t}{4}, \, \frac{2}{e^t} \right).
$$
For $f_t$ to be a contraction, $L(f_t) < 1$ requires:
$$
    \frac{e^t}{4} < 1 \implies t < \ln 4 \quad \text{and} \quad \frac{2}{e^t} < 1 \implies t > \ln 2.
$$
Thus, the mapping $f_t$ is a contraction if and only if $t \in (\ln 2, \ln 4)$. For $t \le \ln 2$ and $t \ge \ln 4$, contractivity fails.

By Theorem \ref{thm:topological_stability}, the corresponding system $\mathcal{I}_t$ of the temporal iterated function system possesses a unique, compact attractor $\mathcal{A}^*(t) = \Phi_{t, 0}(A_0)$ for all $t \in T$. Figure \ref{fig:transient_contractivity} illustrates instances of $\mathcal{A}^*(t)$ at $t=0$, $1$, and $2$.

\begin{figure}[htbp]
    \centering
    \includegraphics[width=\textwidth]{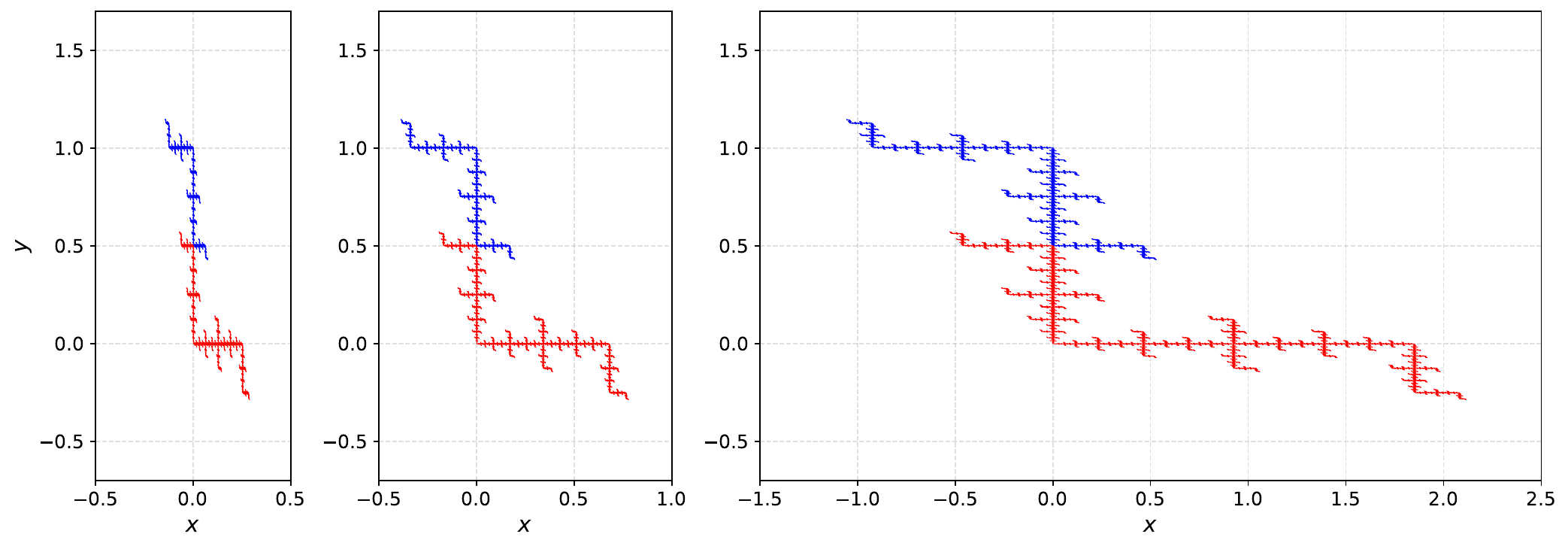}
    \caption{Evolution of the attractor associated with Example \ref{ex:transient_contractivity}, shown from left to right at times $t=0$, $1$, and $2$. The red and blue portions denote the images of the attractor under the mappings $f_t$ and $g_t$, respectively. The system fails to be a classical IFS at $t=0.0$ ($L(f_0) = 2.0$), satisfies contractivity at $t=1.0$ ($L(f_1) \approx 0.73$), and fails at $t=2.0$ ($L(f_2) \approx 1.84$). The attractor persists despite the failure of classical contraction conditions.}
    \label{fig:transient_contractivity}
\end{figure}
\end{example}

\begin{example}\label{ex:conformal_flow}
Let the base space be $X = \mathbb{C}$ equipped with the Euclidean metric $d(z, w) = |z - w|$, and the time domain be $T = [0, \infty)$. Let the evolution family be the fractional linear transformation:
$$
    \Phi_{t, s}(z) = \frac{z}{1 - 0.8i(\sin t - \sin s)z}.
$$
The inverse flow to $t_0 = 0$ is $\Phi_{0, t}(z) = z / (1 + 0.8i\sin(t)z)$.

To satisfy the axioms of a temporal metric space, the evolution family must act as a homeomorphism between fibers. Define the reference fiber as the compact disk $X(0) = \{z \in \mathbb{C} \mid |z| \le 1.1\}$, and define subsequent fibers dynamically as $X(t) = \Phi_{t, 0}(X(0))$ for all $t \in T$. For any $s, t \in T$, the map $\Phi_{t, s}: X(s) \to X(t)$ is well-defined provided its pole does not lie in $X(s)$. Equivalently, the pole pulled back to the reference fiber via the inverse flow $\Phi_{0, s}$ must not lie in $X(0)$. By the composition axiom $\Phi_{t, s} \circ \Phi_{s, 0} = \Phi_{t, 0}$, this pre-image $z_0 \in \mathbb{C}$ is simply the pole of $\Phi_{t, 0}$, which occurs when:
$$
    1 - 0.8i\sin(t)z_0 = 0 \implies z_0 = \frac{1}{0.8i\sin t} = \frac{-i}{0.8\sin t}.
$$
The modulus of this pole evaluates to:
$$
    |z_0| = \frac{1}{0.8|\sin t|} \ge \frac{1}{0.8} = 1.25.
$$
Because the reference fiber $X(0)$ is strictly bounded by $|z| \le 1.1$, the pole $z_0$ never enters $X(0)$ for any $t \in T$. Consequently, the singularities remain outside every spatial fiber $X(s)$, ensuring that each evolution map $\Phi_{t,s}$ is a well-defined homeomorphism on its domain.

Let $\mathcal{I}_0 = \{f_0, g_0, h_0\}$ be an IFS on the fiber $X(0)$, where:
\begin{align*}
    f_0(z) &= 0.5z + 0.4i, \\
    g_0(z) &= 0.5z - 0.4 - 0.3i, \\
    h_0(z) &= 0.5z + 0.4 - 0.3i.
\end{align*}
The mappings $f_0, g_0, h_0$ map $X(0)$ into itself and are contractions on $(X(0), d)$ with Lipschitz constant $0.5$. Thus, $\mathcal{I}_0$ is a classical IFS consisting of contractions on $(X(0), d)$.

Defining $\tau_t = 0.8i\sin t$, the temporal IFS $\mathcal{I}_t = \{f_t, g_t, h_t\}$ acting on the fiber $X(t)$ evaluates to:
\begin{align*}
    f_t(z) &= \big(\Phi_{t, 0} \circ f_0 \circ \Phi_{0, t}\big)(z) = \frac{(0.5 + 0.4i\tau_t)z + 0.4i}{(0.5\tau_t - 0.4i\tau_t^2)z + (1 - 0.4i\tau_t)}, \\
    g_t(z) &= \big(\Phi_{t, 0} \circ g_0 \circ \Phi_{0, t}\big)(z) = \frac{(0.5 - (0.4 + 0.3i)\tau_t)z - (0.4 + 0.3i)}{(0.5\tau_t + (0.4 + 0.3i)\tau_t^2)z + (1 + (0.4 + 0.3i)\tau_t)}, \\
    h_t(z) &= \big(\Phi_{t, 0} \circ h_0 \circ \Phi_{0, t}\big)(z) = \frac{(0.5 + (0.4 - 0.3i)\tau_t)z + (0.4 - 0.3i)}{(0.5\tau_t - (0.4 - 0.3i)\tau_t^2)z + (1 - (0.4 - 0.3i)\tau_t)}.
\end{align*}
Because the evolution family consists of Möbius transformations whose poles lie outside $X(t)$, and the reference mappings are affine, the induced mappings are compositions of analytic functions. Therefore, they remain analytic on $X(t)$, and their contractivity factors evaluate exactly to the maximum modulus of their derivatives over the domain. 

As time evolves, these moduli fluctuate. At specific times, the maximum modulus exceeds $1$, and contractivity fails.

By Theorem \ref{thm:topological_stability}, the corresponding system $\mathcal{I}_t$ of the temporal iterated function system possesses a unique, compact attractor $\mathcal{A}^*(t) = \Phi_{t, 0}(A_0)$ for all $t \in T$. Figure \ref{fig:conformal_flow} illustrates instances of $\mathcal{A}^*(t)$ at $t=0$, $1$, and $4$.

\begin{figure}[htbp]
    \centering
    \includegraphics[width=\textwidth]{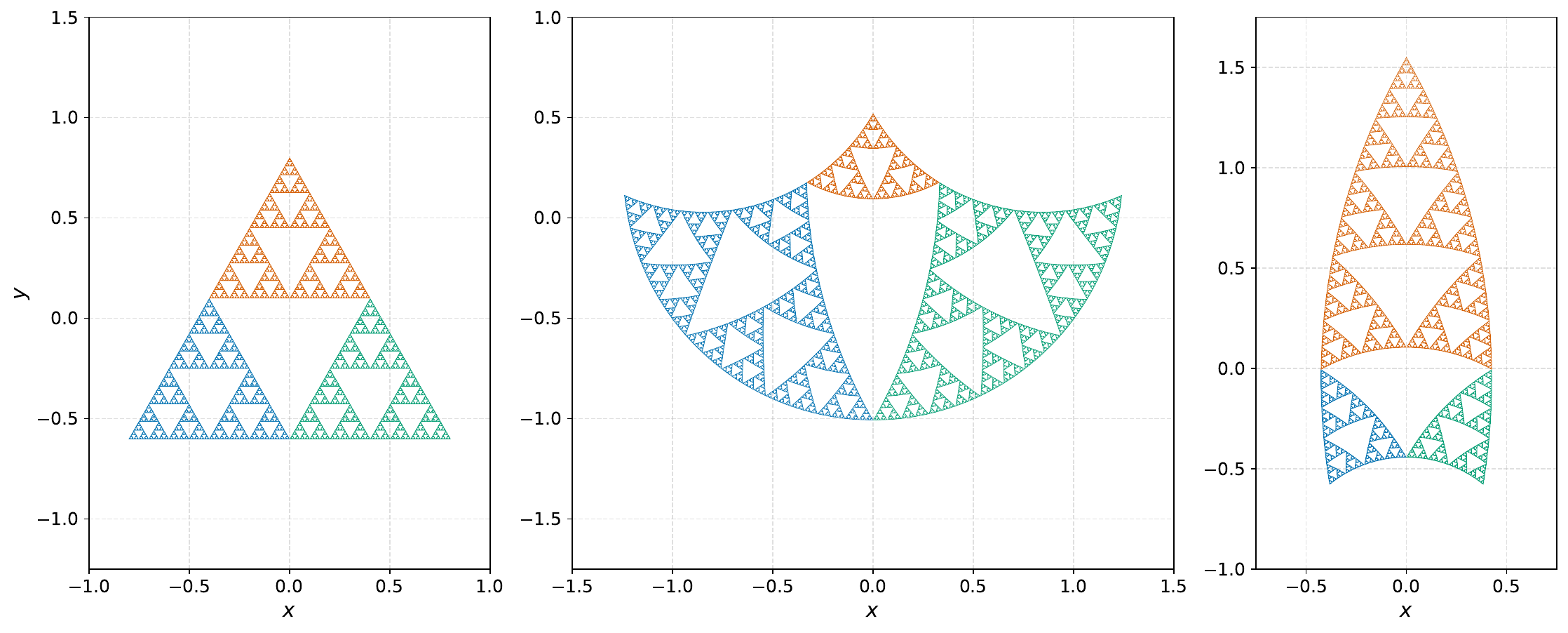}
    \caption{Evolution of the attractor associated with Example \ref{ex:conformal_flow}, shown from left to right at times $t=0$, $1$, and $4$. The red, blue, and green portions denote the images of the attractor under the mappings $f_t$, $g_t$, and $h_t$, respectively. The flow-induced iterated function system consists of contractions at $t=0$, but fails to satisfy contractivity at $t=1$ and $t=4$. The attractor persists despite the failure of classical contraction conditions.}
    \label{fig:conformal_flow}
\end{figure}
\end{example}

\section{Conclusion}
In this paper, we introduced temporal metric spaces to analyze structures whose underlying domains evolve over time. By constructing interval-dependent metrics on trajectory spaces, we established conditions for the inheritance of completeness from base spaces to trajectory spaces and their associated temporal hyperspaces. We applied this framework to iterated function systems, proving that flow-induced temporal attractors persist even when ambient contractivity fails. The extension of iterated function systems serves as one specific application of this structure. The framework of temporal metric spaces provides a basis to investigate how other fundamental concepts from classical metric space theory adapt and develop in time-dependent environments.

\section*{Acknowledgements}
This research was financially supported by the University Grants Commission (UGC), Government of India, through the award of the Junior Research Fellowship (JRF) (Ref. No. [1090(CSIR-UGC NET JUNE 2019)]) awarded to the first author. The first and third authors acknowledge the institutional support provided by Rajagiri School of Engineering and Technology, Kerala, India, and the second author acknowledges the institutional support provided by Muthoot Institute of Technology and Science, Kerala, India.

\bibliographystyle{amsplain}
\bibliography{references}

\end{document}